\definecolor{cof}{RGB}{219,144,71}
\definecolor{pur}{RGB}{186,146,162}
\definecolor{greeo}{RGB}{91,173,69}
\definecolor{greet}{RGB}{52,111,72}
\definecolor{MyDarkBlue}{rgb}{0,0.08,0.50}
\definecolor{BrickRed}{rgb}{0.65,0.08,0}
\theoremstyle{plain}
\newtheorem{LEM}{Lemma}
\newtheorem{THM}{Theorem}
\newtheorem{COR}{Corollary}
\newtheorem{REM}{Remark}
\newtheorem{EXA}{Example}
\newtheorem{OP}[EXA]{Open Problem}
\newcommand{\nc}[1]{}
\newcommand{\1}{\mathbbm{1}}
\newcommand{\indic}[1]{\1_{\{#1\}}}
\newcommand{\nn}{\nonumber}
\numberwithin{equation}{section}
\renewcommand{\P}{\mathbb{P}}
\newcommand{\E}{\mathbb{E}}
\newcommand{\N}{\mathbb{N}}
\renewcommand{\H}{\mathbb{H}}
\newcommand{\Q}{\mathbb{Q}}
\newcommand{\R}{\mathbb{R}}
\newcommand{\Z}{\mathbb{Z}}
\newcommand{\DD}{\mathbb{D}}
\newcommand{\floor}[1]{\lfloor #1 \rfloor}
\newcommand{\mc}[1]{\mathcal{#1}}
\newcommand{\blank}[1]{}
\newcommand{\sss}{\scriptscriptstyle}
\newcommand{\vep}{\varepsilon}
\newcommand{\tempend}{\end{document}}
\newcommand{\bs}[1]{{\boldsymbol #1}}
\newcommand{\bo}{\bs{\omega}}
\newcommand{\sg}{\mc{S}_p^{\textrm{good}}}
\newcommand{\orth}{\mathbb{O}_+}
\DeclareMathOperator{\sgn}{sgn}
\begin{document}

\author{
Nicholas Beaton\footnote{\tt nrbeaton@unimelb.edu.au}, Mark Holmes\footnote{\tt holmes.m@unimelb.edu.au}, and Xin Huang\footnote{\tt huangxinop@gmail.com}\\
\small{School of Mathematics and Statistics}\\
\small{The University of Melbourne}}

\title{Chemical distance 
for the	half-orthant model.
}

\maketitle

\begin{abstract}
The half-orthant model is a partially oriented model of a random medium involving a parameter $p\in [0,1]$, for which there is a critical value $p_c(d)$ (depending on the dimension $d$) below which every point is reachable from the origin.  
We prove a limit theorem for the graph-distance (or ``chemical distance") for this model when  $p<p_c(2)$, and also when $1-p$ is larger than the critical parameter for site percolation in $\Z^d$.    The proof involves an application of the subadditive ergodic theorem.  Novel arguments herein include the method of proving that the expected number of steps to reach any given point is finite, as well as an argument that is used to show that the shape is ``non-trivial'' in certain directions.

\end{abstract}

\noindent{\bf Keywords:} random media, shape theorem, chemical distance, percolation.

\noindent{\bf MSC2020:} 60K35

\section{Introduction and main results}
The half-orthant model is an i.i.d.~site-based model of a random environment defined as follows:    Independently at each site $x\in \Z^d$, with probability $p\in [0,1]$ insert arrows from $x$ pointing to each of the  neighbours $x+e_i$, $i\in [d]:=\{1,2,\dots, d\}$, and otherwise (with probability $1-p$) insert arrows from $x$ to all neighbours $x\pm e_i$ of $x$.   Here $(e_i)_{i=1}^d$ are the canonical basis vectors. Call the sites that get all outward arrows \emph{full sites} and the others \emph{half sites}.  Let $\Omega_+$ be the set of half sites.

For fixed $d\ge 2$, let $\bs{\omega}=(\omega_x)_{x \in \Z^d}$ be such an environment and let $\gamma=(\gamma_i)_{i=0}^\ell$ be a nearest-neighbour path of length $\ell<\infty$ in $\Z^d$.  For $j\in [\ell]$, the $j$-th step of the path $\gamma_j-\gamma_{j-1}$ is \emph{consistent with} $\bs{\omega}$ if $\omega_{\gamma_{j-1}}$ contains the arrow pointing in direction $\gamma_j-\gamma_{j-1}$.  The path $\gamma$ is \emph{consistent with} $\bs{\omega}$ if the $j$-th step is consistent with $\bs{\omega}$ for every $j\in [\ell]$.   Figure \ref{fig:HO1} shows an example of a finite piece of an environment for this model, together with a path of length 8 that is consistent with this environment.

\begin{figure}
	\begin{center}
	\includegraphics[scale=0.5]{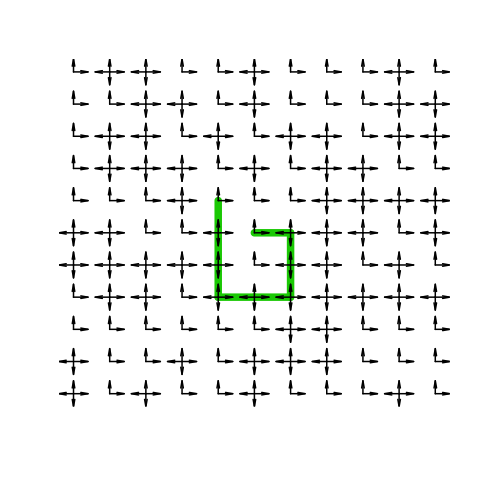}
	\end{center}
	\caption{An example of a finite piece of the environment for the half-orthant model, with the shortest path from the origin (centre) to $(-1,1)$ highlighted.}
	\label{fig:HO1}
\end{figure}

Consider the set of points $\mc{C}_o$ reachable from the origin $o=(0,0,\dots, 0)$ by following arrows.  This is exactly the set of points $x$ for which there exists a finite path from $o$ to $x$ that is consistent with the environment.   When $p=1$ this set is the positive orthant, i.e.~$\mc{C}_o(1)=\orth:=\{x\in \Z^d:x^{\sss [i]}\ge 0 \text{ for }i \in [d]\}$, where $x^{\sss [i]}$ denotes the $i$-th coordinate of $x$.  When $p=0$ this set is all of $\Z^d$, i.e.~$\mc{C}_o(0)=\Z^d$.  There is a natural coupling of environments for all values of $p$, such that the cluster $\mc{C}_o(p)$ is decreasing in $p$.  Moreover there exists a critical value $p_c(d)\in (0.57,1)$ such that almost surely $\mc{C}_o(p)=\Z^d$ if $p<p_c(d)$ and almost surely $\mc{C}_o(p)\ne \Z^d$ if $p>p_c(d)$.  See \cite{DREphase,DRE,DRE2} etc.

Understanding these kinds of site-based i.i.d.~environments lays the foundation for understanding random walks in i.i.d.~\emph{non-elliptic} (i.e.~some steps are not available from some sites) random environments see e.g.~\cite{RWDRE,RWDRE2}.  However in this paper we are concerned with a version of a shape theorem for this model.

A shape theorem for the forward cluster $\mc{C}_o$ as a set of points has been proved for this model when $p$ is large \cite{Shape}.  This result is similar to the kind of shape theorems available for oriented percolation and the contact process  (see e.g.~\cite{Dur84,DG}) - morally it says that the cluster looks like a cone from far away.  The fact that the half-orthant model is only partly oriented presents different  challenges compared to the oriented percolation setting.  For example, the result in \cite{Shape} currently does not extend to all $p>p_c(d)$ when $d\ge 3$ in part because of a lack of a ``sharpness'' result for this phase transition\footnote{Beekenkamp \cite{Beek},  has shown that the relevant exponential decay of certain connection probabilities holds for all $p$ larger than some $p_c'(d)$ which is believed (but not proved) to be equal to $p_c(d)$.}.

In this paper we consider the lengths of shortest paths (or \emph{chemical distance}) between vertices.  The chemical distance is a fundamental notion in the first passage percolation literature (see e.g.~\cite{ADH}).  A substantial difference is that in the half-orthant model edges are directed.  First passage percolation on directed edges has been considered recently in \cite{GM21} but our model is a site-based i.i.d.~environment (in  \cite{GM21} the environment is i.i.d.~over edges), so edges are locally  correlated in our setting.

For $u,v \in \Z^d$ and an environment $\bs{\omega}$ let $T_{u,v}=T_{u,v}(\bs{\omega})$ denote the length of any shortest path from $u$ to $v$ that is consistent with the environment.  If no such path exists then set $T_{u,v}=\infty$.   
For $v\in \Z^d$, let $T_v=T_{o,v}$.  In Figure \ref{fig:HO1}, $T_{(-1,1)}=T_{o,(-1,1)}=8$. When $p<p_c(d)$ we have that $\mc{C}_o=\Z^d$ almost surely, so $T_v$ is a.s.~finite.  By translation invariance, $T_{u,v}$ is almost surely finite for every $u,v\in \Z^d$.  Clearly, for any $v\in \Z^d$ we have $T_{v}\ge \|v\|_1:=\sum_{i=1}^d |v^{\sss [i]}|$.   In order to visualise our main result, it is useful to define $A_n=\{x\in \Z^d:T_{x}=n\}$, which is the random subset of sites $x$ for which the shortest path from the origin to $x$ has length $n$.   When $p=0$ this set is the $\ell_1$ ball $B_n:=\{x\in \Z^d:\|x\|_1=n\}$, and when $p=1$ this set is $B^+_n:=\{x\in \orth:\|x\|_1=n\}$.   Indeed, it is elementary that for every $p$, all points $v\in \Z^d$ in the positive orthant $\orth$ can be reached in $\|v\|_1$ steps.  The other directions are the interesting ones for this model. 
The first row in Figure \ref{fig:nick1} shows a simulation of the set $A_{4000}$ when $p=0.25,0.5,0.75$.
\begin{figure}
	\begin{center}
		\includegraphics[scale=0.04]{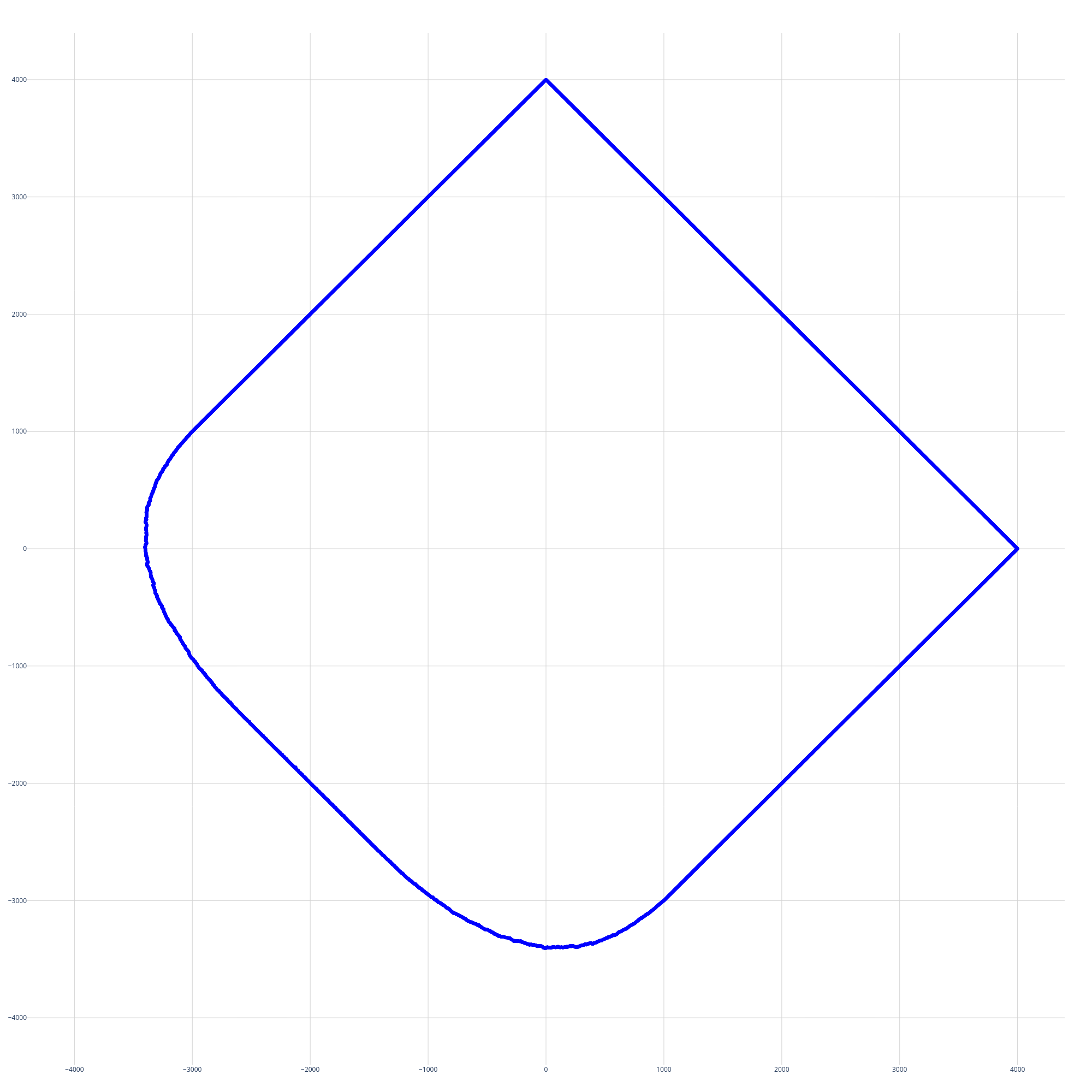}
		\hspace{1.5cm}
		\includegraphics[scale=0.04]{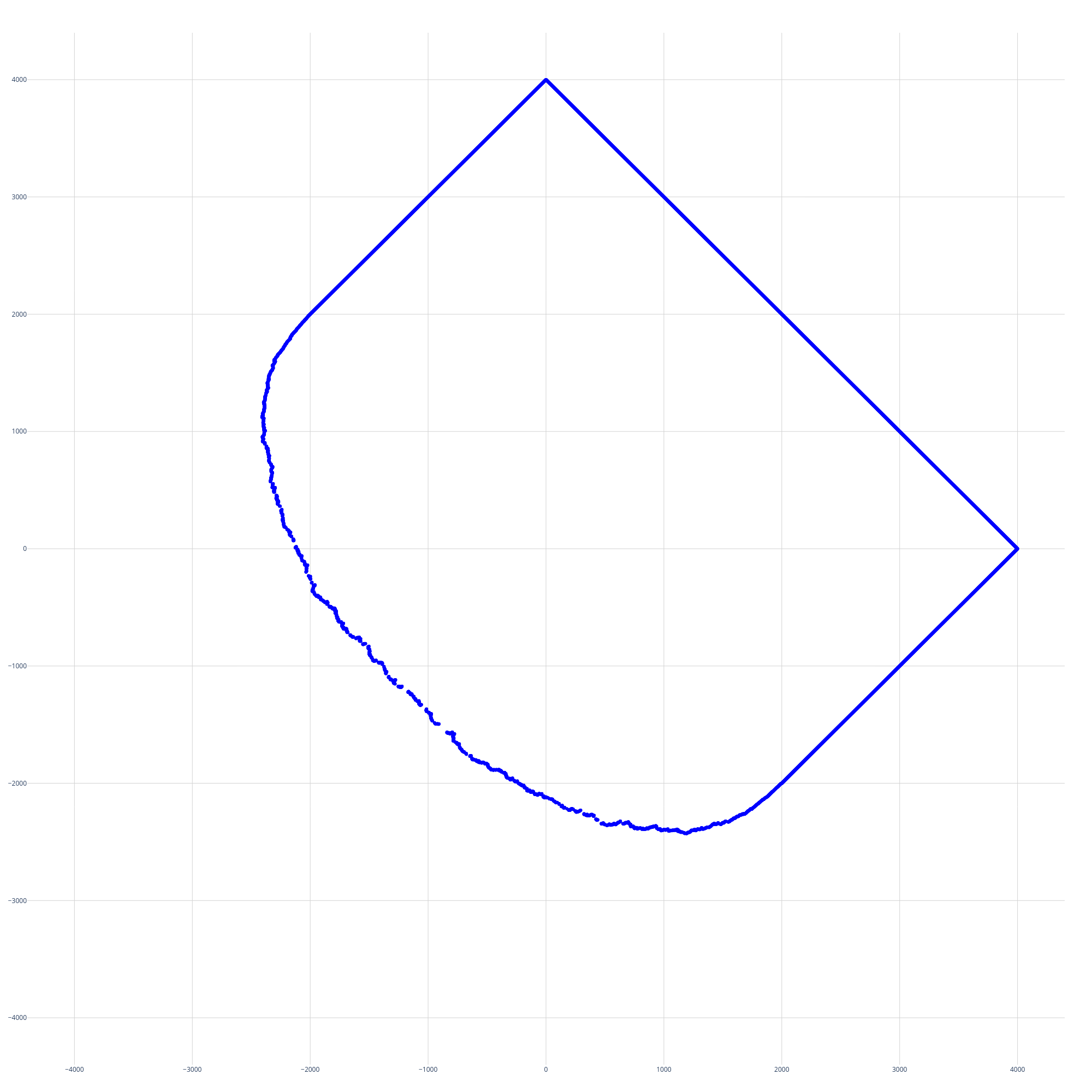}
		\hspace{1.5cm}
		\includegraphics[scale=0.04]{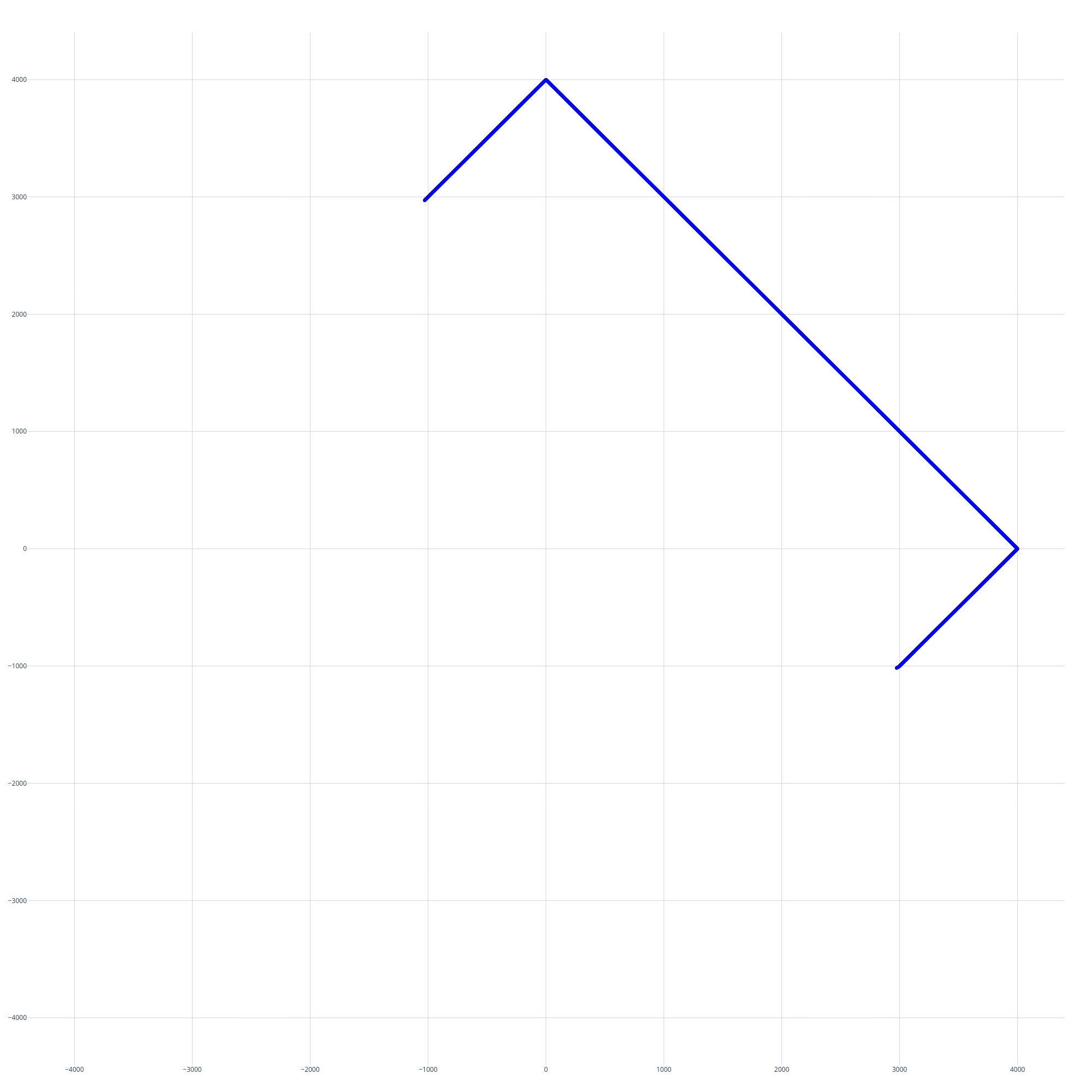}	\\
		\includegraphics[scale=0.25]{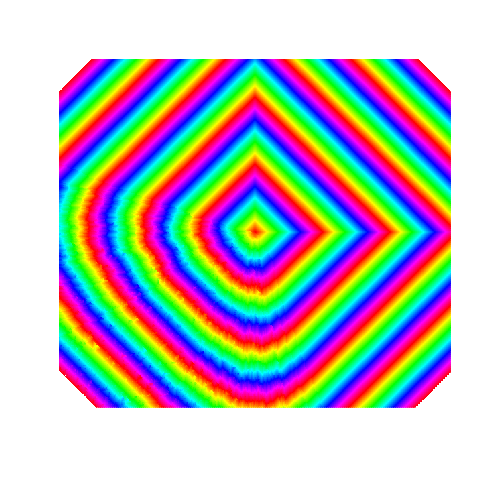}
		\includegraphics[scale=0.25]{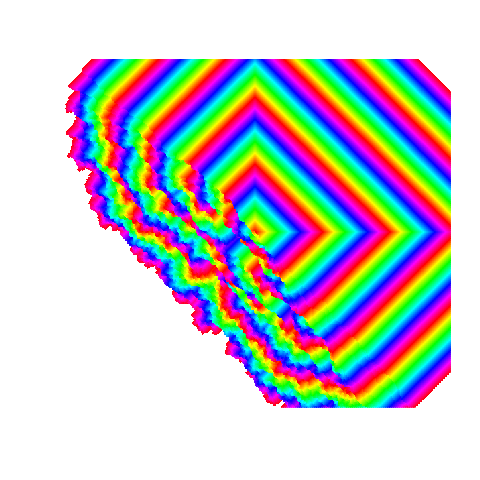}
		\includegraphics[scale=0.25]{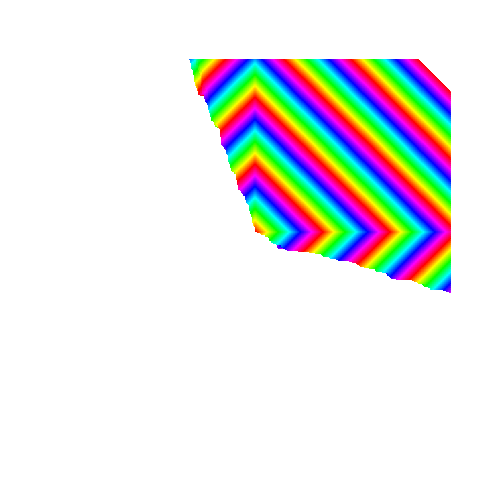}		
	\end{center}
	\caption{Top row: Simulation of the set of points whose distance from the origin within the random environment is exactly $n=4000$, for $p=0.25, p=0.5,p=0.75$ (left to right) respectively.  Bottom row: Simulations of these set for different values of $n$, for $p=0.25, p=0.5,p=0.75$.  Note the apparent flat and curved regions in these shapes.}
	\label{fig:nick1}
\end{figure}

As in the second row of Figure \ref{fig:nick1} the ``shape'' of $A_n$ remains consistent for large values of $n$.  This suggests the existence of a \emph{limiting} shape.  Note also several features of these shapes.  In each case there are clear ``flat'' regions.  For large $p$ these flat regions are restricted to directions in a neighbourhood of the positive orthant.  For small $p$ there is an additional flat region in directions in a neighbourhood of $(-1,-1)$.

The main results of this paper address both the law of large numbers for the chemical distance (Theorem \ref{thm:1} below) and features of the corresponding ``shape'' (Theorem \ref{thm:features} below).  We will utilise several facts about site percolation and oriented site percolation on $\Z^d$.  Let $p_*(d)$ denote the critical parameter for site percolation on $\Z^d$, and $p_{\dagger}(d)$ denote the critical parameter for \emph{oriented} site percolation on $\Z^d$.

Our first main result is the following theorem.
\begin{THM}
	\label{thm:1}
	Fix $d\ge 2$.  For $p$ such that $1-p>\min\{1-p_c(2),p_*(d)\}$ there exists a function $\zeta_p:\Z^d\to \R_+$ such that for each $v\in \Z^d$
	\[\lim_{n \to \infty}n^{-1}T_{nv}(p) =\zeta_p(v), \quad \text{almost surely and in $L^1$}.\]
\end{THM}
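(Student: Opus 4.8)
The plan is to apply Kingman's subadditive ergodic theorem. First I would fix $v\in\Z^d$ and, for $m<n$, define $T_{m,n}$ to be the length of a shortest consistent path from $mv$ to $nv$. Subadditivity $T_{0,n}\le T_{0,m}+T_{m,n}$ is immediate (concatenate a shortest path from $o$ to $mv$ with one from $mv$ to $nv$), and stationarity/ergodicity of the family $(T_{m,n})_{m<n}$ follows from the i.i.d.\ structure of the environment under the shift by $v$. So the heart of the matter is the integrability hypothesis: one must show $\E[T_{0,1}(p)]=\E[T_v(p)]<\infty$ (and then $\E[T_{m,n}]<\infty$ for all $m<n$ by translation invariance). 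Kingman's theorem then yields a constant $\zeta_p(v)=\lim_n n^{-1}T_{nv}$, with convergence a.s.\ and in $L^1$, and $\zeta_p(v)=\inf_n n^{-1}\E[T_{nv}]$. Nonnegativity of $\zeta_p$ is trivial since $T_{nv}\ge\|nv\|_1\ge 0$.

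The main obstacle is exactly the finiteness of $\E[T_v(p)]$, and the excerpt flags this as one of the novel contributions (``the method of proving that the expected number of steps to reach any given point is finite''). The regime is $1-p>\min\{1-p_c(2),p_*(d)\}$, i.e.\ either $p<p_c(2)$ (so $\mc{C}_o=\Z^d$ a.s.\ by the stated sharpness-type result, but a.s.\ finiteness does not by itself give \emph{integrability}), or $1-p>p_*(d)$ so that the half sites $\Omega_+$ percolate in $\Z^d$. I would handle the two sub-cases separately. When $1-p>p_*(d)$: a half site behaves like a full site for the purpose of moving in \emph{any} coordinate direction, so on the infinite cluster of half sites one can travel freely; I would build a path from $o$ to $v$ that first reaches the infinite half-site cluster, traverses it to a point near $v$, and finishes off, and bound its length using the exponential tail of the distance to/within the percolation cluster (standard site-percolation estimates, e.g.\ from \cite{ADH} style arguments adapted to $\Z^d$). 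When $p<p_c(2)$: here I would exploit a renewal/renormalisation scheme — partition space into large blocks, call a block ``good'' if its internal structure guarantees the ability to route a consistent path across it in a controlled number of steps regardless of the entry face, show good blocks dominate a supercritical (oriented, if necessary) percolation, and then concatenate crossings of good blocks. In either case the key quantitative input is a stretched-exponential or exponential bound $\P(T_v>t)\le Ce^{-ct^{\alpha}}$ for some $\alpha>0$, which gives integrability with room to spare.

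A technical point worth isolating: to invoke the block argument in the $p<p_c(2)$ case one needs that, from the origin, one can \emph{enter} a good block and, more importantly, that a consistent path can be continued through a good block entered from the ``wrong'' side — this is where the partial orientation bites, since in a full-site-poor region one is forced to move in $+e_i$ directions only. I expect the fix to be that a block is declared good when it contains a suitably dense sub-configuration of full sites (or of half sites, in the percolation case) arranged so that any entry point can be connected to any exit face; the density of such sub-configurations can be made overwhelming by taking the block size large, and a Peierls/stochastic-domination argument then makes the good blocks supercritical. Once $\E[T_v]<\infty$ is in hand, the rest is a direct citation of Kingman's subadditive ergodic theorem, so I would write that part briefly and devote the bulk of the argument to the integrability bound.
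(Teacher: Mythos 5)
Your general framework is the paper's: Kingman's subadditive ergodic theorem applied to $X_{m,n}=T_{mv,nv}$, with everything reducing to the integrability $\E[T_v]<\infty$ (the paper further reduces this, via decomposing $v$ into unit steps and symmetry, to $\E[T_{o,-e_1}]<\infty$). Your treatment of the case $1-p>p_*(d)$ also matches the paper in outline --- reach the infinite percolation cluster, traverse it using chemical-distance bounds of Antal--Pisztora type, then finish --- but note you have the site types backwards: half sites only carry the arrows $+e_1,\dots,+e_d$, so they do \emph{not} allow free movement; it is the \emph{full} sites, which have density $1-p$, that percolate when $1-p>p_*(d)$ and on whose infinite cluster one travels.

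The genuine gap is in the case $p<p_c(2)$. Your proposed renormalisation declares a block good when it contains a dense arrangement of full sites connecting any entry point to any exit face, and you assert this becomes overwhelmingly likely for large blocks. But in the interesting part of this regime the full sites are \emph{subcritical}: $p_c(2)\approx 0.59$ while $p_*(2)\approx 0.59$ as well, so for $p$ just below $p_c(2)$ one has $1-p\approx 0.41<p_*(2)$, and the probability that a large block contains a full-site scaffold spanning it tends to $0$, not to $1$. (If $1-p>p_*(2)$ you are simply back in the other case.) So the key quantitative input you hoped for is missing, and the proposed notion of good block cannot supply it. The paper's argument for $p<p_c(2)$ is quite different and is where the novelty lies: the obstruction to reaching $-e_1$ is encoded by the cluster $\mathfrak{C}$ of half sites containing the origin under \emph{oriented} site percolation on the triangular lattice (connections in directions $-e_1$, $+e_2$, $-e_1+e_2$), whose critical value is exactly $p_c(2)$; hence for $p<p_c(2)$ this cluster is finite with exponential tail on $|\mathfrak{C}|$. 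One then shows by induction on $|\mathfrak{C}|$, using extremal ``awesome'' points of the cluster whose relevant neighbours are all full sites, that there is a consistent path from $o$ to $-e_1$ of length at most $4|\mathfrak{C}|$, giving $\E[T_{o,-e_1}]<\infty$ (indeed exponential tails). Without an input of this kind --- some mechanism that works even when full sites fail to percolate --- your block scheme does not close the argument.
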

The claim with $v=o$ is trivial with $\zeta_p(o)=0$.  For other $v$ this result follows from an application of the subadditive ergodic theorem.  The non-obvious condition to check is that the expected passage times are finite.  For $1-p>1-p_c(2)$ we verify this via a novel (to the best of our knowledge) argument based on duality with an oriented percolation model in 2 dimensions.  For $1-p>p_*(d)$ we (roughly speaking) verify this condition by showing that to reach one vertex from another one can mostly follow a path in a supercritical percolation cluster of full sites.

For $u\in \Q^d$, let $m_u=\inf\{m \in \N:mu\in \Z^d\}$, (so  $m_u=1$ if $u\in \Z^d$).   We extend the domain of $\zeta_p$ to $\Q^d$ by defining 
	\begin{equation}
\zeta_p(u)=\zeta_p(m_u u)/m_u.
\label{zetadef}
	\end{equation}
	  It is then immediate from Theorem \ref{thm:1} that for all $u\in \Q^d$,
\begin{equation*}
\lim_{n \to \infty}(nm_u)^{-1}T_{nm_uu}(p)=\zeta_p(u), \quad \text{ almost surely and in $L^1$}.
\end{equation*} 

For $y\in \R^d$, let $[y]\in \Z^d$ denote the unique lattice point such that $y\in [y]+[0,1)^d$.  For $v\in \Z^d \setminus \{o\}$ let  $D_n(v;p)=\sup\{k\in [0,\infty): T_{[kv]}(p)\le n\}$.  Then by examining the correspondence between events of the form $\{n^{-1}D_n(v)<c\}$ and  $\{T_{[ncv]}>n\}$ one obtains the following.
\begin{COR}
\label{cor:D}
For $1-p>\min\{1-p_c(2),p_*(d)\}$, and $v\in \Z^d \setminus \{o\}$, 
\[\lim_{n \to \infty}\frac{D_n(v;p)}{n}= \zeta_p(v)^{-1}, \text{ a.s.~and in $L^1$}.\] 
\end{COR}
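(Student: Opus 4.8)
The plan is to deduce Corollary~\ref{cor:D} from Theorem~\ref{thm:1} by the standard reciprocal-of-the-shape argument, the only wrinkle being that $D_n(v;p)$ is defined through $T_{[kv]}$ for real $k$ while Theorem~\ref{thm:1} controls only integer multiples of $v$. Fix $v\in\Z^d\setminus\{o\}$ and write $\zeta=\zeta_p(v)$. Since $T_{nv}\ge\|nv\|_1=n\|v\|_1$, dividing by $n$ and sending $n\to\infty$ gives $\zeta\ge\|v\|_1\ge1$, so $\zeta^{-1}\in(0,1]$ is well defined. The first (and only substantial) step is to upgrade Theorem~\ref{thm:1} --- which, since $[nv]=nv$ for $n\in\N$, reads $n^{-1}T_{nv}(p)\to\zeta$ a.s. --- to the continuous-scale statement
\[
\lim_{\substack{k\to\infty\\ k\in\R}}k^{-1}T_{[kv]}(p)=\zeta\qquad\text{almost surely.}
\]

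To establish this I would set $m=\floor{k}$, so that $z:=[kv]-mv$ has $i$-th coordinate $\floor{(k-m)v^{\sss[i]}}$ with $k-m\in[0,1)$ and hence ranges over a finite set $Z_v$ depending only on $v$. Under the hypothesis on $p$ one has $\E[T_w]<\infty$ for every $w\in\Z^d$ (established in the course of proving Theorem~\ref{thm:1}); in particular every site is a.s.\ reachable from every other, so subadditivity of passage times together with translation invariance yields $|T_{[kv]}-T_{mv}|\le R_m:=\max_{z\in Z_v}\max\{T_{mv,\,mv+z},\,T_{mv+z,\,mv}\}$, where $T_{mv,\,mv+z}\overset{d}{=}T_z$ and $T_{mv+z,\,mv}\overset{d}{=}T_{-z}$. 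For fixed $\vep>0$ and $z\in Z_v$, $\sum_{m\ge0}\P(T_{mv,\,mv+z}>\vep m)=\sum_{m\ge0}\P(T_z>\vep m)\le1+\vep^{-1}\E[T_z]<\infty$, and likewise for $T_{-z}$; Borel--Cantelli and a union over the finite set $Z_v$ then give $m^{-1}R_m\to0$ a.s. Dividing $|T_{[kv]}-T_{mv}|\le R_m$ by $k$ and using $m/k\to1$ together with Theorem~\ref{thm:1} gives the displayed limit.

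Granting that, the corollary follows. Since the displayed limit forces $T_{[kv]}\to\infty$ as $k\to\infty$, the set $\{k\in[0,\infty):T_{[kv]}\le n\}$ is bounded, so $D_n(v;p)<\infty$ a.s. Fix $c>\zeta^{-1}$ and choose $\zeta'\in(c^{-1},\zeta)$; a.s.\ there is a random $K$ with $T_{[kv]}>\zeta'k$ for all $k\ge K$, so for $n\ge K/c$ every $k\ge cn$ satisfies $T_{[kv]}>\zeta'k\ge\zeta'cn>n$, whence $D_n(v;p)\le cn$. Symmetrically, fix $c<\zeta^{-1}$ and choose $\zeta''\in(\zeta,c^{-1})$; a.s.\ there is a random $K'$ with $T_{[kv]}<\zeta''k$ for all $k\ge K'$, so for $n\ge K'/c$ we get $T_{[cnv]}<\zeta''cn<n$, whence $D_n(v;p)\ge cn$. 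Letting $c$ decrease (resp.\ increase) to $\zeta^{-1}$ along a countable sequence gives $n^{-1}D_n(v;p)\to\zeta^{-1}=\zeta_p(v)^{-1}$ a.s. For the $L^1$ convergence, fix any $i$ with $v^{\sss[i]}\ne0$; since $|\floor{x}|\ge|x|-1$ for all $x\in\R$, we have $T_{[kv]}\ge\|[kv]\|_1\ge|[kv]^{\sss[i]}|\ge k|v^{\sss[i]}|-1$, so $T_{[kv]}\le n$ forces $k\le(n+1)/|v^{\sss[i]}|$ and hence $D_n(v;p)\le(n+1)/|v^{\sss[i]}|$ deterministically. Thus $n^{-1}D_n(v;p)$ is uniformly bounded, and the almost sure convergence upgrades to $L^1$ by dominated convergence.

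The only ingredient beyond Theorem~\ref{thm:1} is the finiteness of $\E[T_w]$ for the finitely many relevant $w$, which is already in hand, so no genuinely new difficulty arises; the one point that deserves care is that $k\mapsto T_{[kv]}$ need not be monotone, which is precisely why the continuous-scale limit above is obtained by two-sided sandwiching between copies of $T_{mv}$ rather than by a direct monotonicity argument.
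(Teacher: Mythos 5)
Your proposal is correct and follows essentially the same route as the paper: you first extend Theorem~\ref{thm:1} to real scaling parameters (this is exactly the paper's Lemma~\ref{lem:limit_through_reals}, which you prove the same way, sandwiching $T_{[kv]}$ against $T_{\lfloor k\rfloor v}$ via subadditivity and killing the error terms with Borel--Cantelli using $\E[T_z]<\infty$), and then convert this into the two-sided bound on $D_n(v;p)/n$, with $L^1$ convergence from the deterministic bound and dominated convergence. The only cosmetic differences are that you control the boundary error by a maximum over the finite set $Z_v$ rather than a coordinatewise sum over a box, and you obtain the upper bound on $\limsup_n n^{-1}D_n$ by a direct sandwich rather than the paper's subsequence/contradiction argument.
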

For fixed dimension $d\ge 2$ and any orthogonal set $O\subset \mc{E}:=\{\pm e_i:i \in [d]\}$ with $|O|=d$ (i.e.~the elements of $O$ form a basis for $\Z^d$), we can consider \emph{oriented site percolation on $\Z^d$} with orientation given by $O$.  In  other words there are (directed) connections from occupied sites $x$ to neighbouring occupied sites of  the form $x+e$ for each $e\in O$.  These oriented percolation models are identical in law, except for the change of orientation.  It is known (see e.g.~\cite{Dur84,DG,IS}) for supercritical oriented site percolation (i.e.~with parameter $q>p_*(d))$ with orientation $O$, that conditional on the percolation cluster of the origin being infinite, the cluster viewed from far away is a deterministic cone with axis $\sum_{e \in O}e$.  We call this cone  $O_q$.  As $q\uparrow 1$ this cone increases to the whole orthant $\{\sum_{e\in O}\lambda_ee:\lambda_e\ge 0,  \forall e\in O\}$.  Let
\[\mc{O}=\{O\subset \mc{E}:O \text{ is a basis for }\Z^d\}.\]

Let $\DD=\Q^d \setminus \{o\}$, and let 
\begin{align}\sg=\DD\cap \Bigg\{\sum_{i=1}^d (\alpha_i-\beta_i)e_i
: \quad & a\in [p,1], \quad 0\le \alpha_i,\beta_i \;\forall i\in [d], \text{ and }\nn\\
&\sum_{i=1}^d \alpha_i=a=1-\sum_{i=1}^d\beta_i, \text{ and }\nn\\
& \alpha_i\beta_i=0 \text{ for each  }i \in [d]\Bigg\}.
\label{constraint3}
\end{align}

Then $\sg$ is a subset of the boundary of the $\ell_1$ ball  and $\sg$ contains the intersection of this boundary  and $\DD_+=\{x\in \DD:x^{[i]}\ge 0 \text{ for every }i\}$  for every $p$ (take $a=1$).  
See Figure \ref{fig:sg} for an example in 3 dimensions.

Let $\mc{S}_p=\{u \in \DD:\zeta_p(u)=\|u\|_1\}$.    
Our second main result reveals  features of the shape including specifying other directions in $\mc{S}_p$ and its complement.

\begin{THM}
	\label{thm:features}
Fix $d\ge 2$, and $1-p>\min\{1-p_c(2),p_*(d)\}$.  Then the function $\zeta_p:\Q^d\to \R$ in Theorem \ref{thm:1} and \eqref{zetadef} satisfies the following:
\begin{itemize}
\item[(a)]
\begin{itemize}
			\item[(i)] for $q\in \Q_+$, and $u\in \Q^d$, $\zeta_p(qu)=q\zeta_p(u)$,
	\item[(ii)] $\zeta_p$ is subadditive, i.e.~$\zeta_p(u+v)\le \zeta_p(u)+\zeta_p(v)$, 
\item[(iii)] 		$\zeta_p$ is uniformly continuous. 
\end{itemize}
\item[(b)] For each fixed $v\in \Z^d$, $\zeta_p(v)$ is non-decreasing in $p$.
\item[(c)] For  $u\in \DD$  we have $u \in \mc{S}_p$ in the following cases:
\begin{enumerate}
	\item[(i)] $u\in \sg$,
\item[(ii)] $1-p>p_{\dagger}(d)$ and $u\in \bigcup\limits_{O\in \mc{O}}O_{1-p}$.
\end{enumerate}
\item[(d)] For  $u\in \DD$ with $\|u\|_1=1$  we have $u \notin \mc{S}_p$ (i.e. $\zeta_p(u)>1$) in the following cases:
\begin{enumerate}
	\item[(i)] for any $i=1,\dots,d$, $\|u+e_i\|_1<\vep$ for $\vep$ sufficiently small depending on $d,p$,
	\item[(ii)] $d=2$ and  $u=(-(1-s),s)$ for $s\in [0,p)\cap \Q$. 
\end{enumerate}
\end{itemize}
\end{THM}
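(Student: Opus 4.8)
Throughout write $v=m_uu\in\Z^d$, so that $\zeta_p(u)=\zeta_p(v)/m_u$ and $\|v\|_1=m_u\|u\|_1$; it suffices to work with such integer points. The plan is to get (a) and (b) as soft consequences of how $\zeta_p$ is built, to prove the matching upper bound in (c) by exhibiting explicit near-geodesics, and to prove the strict lower bounds in (d); part (d) is where the genuine difficulty lies.

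\emph{Parts (a) and (b).} Homogeneity (a)(i) is immediate from $\zeta_p(kv)=\lim_n n^{-1}T_{nkv}=k\,\zeta_p(v)$ for $k\in\N$ together with \eqref{zetadef}, after clearing denominators. For subadditivity (a)(ii), pick $m$ with $mu,mv,m(u+v)\in\Z^d$ and concatenate a shortest consistent path $o\to m'mu$ with one $m'mu\to m'm(u+v)$ to get $T_{o,m'm(u+v)}\le T_{o,m'mu}+T_{m'mu,m'm(u+v)}$; taking expectations, dividing by $m'm$, letting $m'\to\infty$, and using translation invariance together with the $L^1$ convergence of Theorem~\ref{thm:1}, gives $\zeta_p(u+v)\le\zeta_p(u)+\zeta_p(v)$. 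Since $\zeta_p(\pm e_i)<\infty$ by Theorem~\ref{thm:1}, (a)(i)--(ii) yield $\zeta_p(w)\le C\|w\|_1$ with $C=\max_{i,\pm}\zeta_p(\pm e_i)$, whence $|\zeta_p(u)-\zeta_p(v)|\le\max\{\zeta_p(u-v),\zeta_p(v-u)\}\le C\|u-v\|_1$, which is (a)(iii). For (b), use the monotone coupling of environments: if $p'\le p$ then every site that is full in $\bo(p)$ is full in $\bo(p')$ while every half site of $\bo(p)$ retains its $+e_i$ arrows, so any $\bo(p)$-consistent path is $\bo(p')$-consistent and $T_{nv}(p')\le T_{nv}(p)$ pointwise; taking expectations and dividing by $n$ gives $\zeta_{p'}(v)\le\zeta_p(v)$.

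\emph{Part (c).} For (c)(i), first assume $u\in\sg$ with $a>p$ strictly, and decompose $u=u^+-u^-$ with $u^+,u^-$ of disjoint support, $\|u^+\|_1=a$, $\|u^-\|_1=1-a$. I will build, for each $k$, a consistent path $o\to kv$ of length exactly $\|kv\|_1=km_u$, succeeding with probability $1-e^{-ck}$. Take the path monotone (coordinates in $\mathrm{supp}(u^+)$ only increase, those in $\mathrm{supp}(u^-)$ only decrease); it is then self-avoiding, so its vertices are distinct and the environment along it is revealed vertex by vertex as fresh i.i.d.\ data. Every $-e_i$ step must be launched from a full site, so I proceed columnwise in the level sets of the coordinate being decreased, pairing the negative directions against the positive ``climbing'' budget: from the current climbing height I step up (a free move) to the next full site and take the $-e_i$ step there. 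The vertical gaps used are i.i.d.\ geometric with mean $p/(1-p)$, so realizing all $km_u(1-a)$ negative steps costs a total climb $\approx \frac{p}{1-p}km_u(1-a)$ out of the available budget $km_u a$; the requirement $\frac{p}{1-p}(1-a)\le a$ is exactly $a\ge p$, so for $a>p$ it holds with a fixed margin, and exponential concentration of the geometric sums makes the construction succeed with probability $1-e^{-ck}$. Borel--Cantelli then gives $T_{kv}\le\|kv\|_1$ for all large $k$, hence $\zeta_p(v)\le\|v\|_1$; the reverse inequality is trivial. The boundary case $a=p$ forces $p\in\Q$; then $u$ is the limit of the rational points $\tfrac{p+\delta}{p}u^+-\tfrac{1-p-\delta}{1-p}u^-\in\sg$ (with parameter $p+\delta>p$), and (a)(iii) finishes it. For (c)(ii), when $1-p>p_\dagger(d)$ the full sites form, for each $O\in\mc{O}$, a supercritical oriented site percolation configuration; a path using only $+e$ steps with $e\in O$ and passing through full sites is automatically consistent, and such a path of length $\ell$ reaches a point of $\ell_1$-norm $\ell$. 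By the oriented percolation shape theorem (\cite{Dur84,DG,IS}), on the positive-probability event that the full-site oriented cluster of $o$ is infinite, $kv$ lies in that cluster for all large $k$ whenever $u\in\mathrm{int}\,O_{1-p}$, so $T_{kv}\le\|kv\|_1$ and hence $\zeta_p(v)\le\|v\|_1$; boundary directions of $O_{1-p}$ follow by (a)(iii).

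\emph{Part (d), the main obstacle.} A shortest consistent path is automatically self-avoiding (any loop can be excised), hence has distinct vertices, and each of its $\approx\|nu\|_1$ negative-direction steps is launched from a \emph{distinct} full site; since full sites have density $1-p<1$, this forces extra length in the relevant directions. For (d)(i), with $\|u+e_i\|_1<\varepsilon$, a direct first-moment bound suffices: summing, over all self-avoiding paths $o\to[nu]$ of length $\ell\le(1+c)\|[nu]\|_1$, the probability $(1-p)^{\#\{\text{negative steps}\}}=(1-p)^{\|[nu]^-\|_1+(\ell-\|[nu]\|_1)/2}$ that the path is consistent, one checks that the exponential growth rate of the number of such (almost-directed) paths is dominated by $\|[nu]^-\|_1\ln\frac{1}{1-p}$ once $\varepsilon$ is small enough depending on $d$ and $p$; hence $T_{[nu]}>(1+c)\|[nu]\|_1$ eventually and $\zeta_p(u)>\|u\|_1$. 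For (d)(ii) the crude first-moment bound is \emph{not} strong enough to reach the sharp threshold $s<p$ --- near $s=p$ there are too many almost-directed paths to $(-(1-s)n,sn)$ --- and I expect this to be the hardest single step. Instead one runs the mirror image of the (c)(i) computation: a monotone West--North path to $(-(1-s)n,sn)$ exists only if a sum of $n(1-s)$ i.i.d.\ geometric$(1-p)$ vertical gaps is $\le ns$, an exponentially unlikely large deviation precisely when $s<p$; one then upgrades this to \emph{all} consistent paths --- via a structural ``staircase'' decomposition of an arbitrary shortest path that exploits self-avoidance, or equivalently via the duality with two-dimensional oriented percolation referenced in the introduction --- to conclude $\zeta_p\big((-(1-s),s)\big)>1$ for $s\in[0,p)$.
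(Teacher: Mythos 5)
Your handling of (a), (b) and (d)(i) follows the paper's proofs, and your greedy construction for (c)(i) -- climb through i.i.d.\ geometric gaps in positive directions to reach full sites, launch the negative steps from there, and use the margin $a>p$ together with concentration of the geometric sums and Borel--Cantelli -- is a correct alternative to the paper's argument, which instead runs a randomized walk (using auxiliary uniforms) with mean drift $\mu_b$ close to $v$, applies the law of large numbers, and corrects the endpoint with $O(\vep n)$ extra positive steps. The two genuine problems are in (c)(ii) and (d)(ii).

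For (c)(ii), your key assertion is false: it is not true that, on the event that the oriented full-site cluster of $o$ is infinite, $kv$ lies in that cluster for all large $k$. The points $kv$ are half sites independently with probability $p$, so almost surely infinitely many of them are not even open, hence not in the cluster; the shape/coupled-region theorems for oriented percolation give coverage of the cone at density $\theta$, never full coverage. What you actually need is only that $\P^\dagger_{1-p}(o\to kv \text{ i.o.})>0$ (positive probability suffices because $T_{kv}/k$ converges a.s.\ to the deterministic $\zeta_p(v)$), and this is precisely the paper's Lemma \ref{lem:OPhits}, proved by decomposing on the first time the spatial projection of $kv$ is hit and combining the shape theorem with the complete convergence theorem; the paper stresses that for $d>2$ this statement cannot simply be quoted. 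Citing ``the oriented percolation shape theorem'' does not close this step.

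For (d)(ii), you correctly isolate the sharp-threshold heuristic (a monotone west--north path to $(-(1-s)n,sn)$ forces a sum of $(1-s)n$ geometric gaps of mean $p/(1-p)$ to be at most $sn$, exponentially unlikely exactly when $s<p$), but the upgrade from monotone paths to \emph{all} consistent paths of length at most $(1+\vep)n$ is left as ``a staircase decomposition or duality'', which is the entire difficulty and is not supplied. The paper's solution is the westernisation lemma (Lemma \ref{lem:western}): for any path whose $\leftarrow$ steps are consistent, the path that keeps the same $\rightarrow,\downarrow$ steps at the same times and otherwise always takes $\leftarrow$ when available ends at least as far in $\ell_1$-norm, with both coordinate discrepancies equal and nonnegative. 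One then conditions on the (at most $\vep n$) times and types of the $\rightarrow,\downarrow$ steps, observes that the resulting exploration path moves through fresh sites taking $\uparrow$ with probability $p$ and $\leftarrow$ with probability $1-p$, applies Hoeffding to show its height exceeds $n(s+\vep/4)$ up to an exponentially small failure probability, and beats the entropy $\binom{\lfloor(1+\vep)n\rfloor}{\lceil\vep n\rceil}2^{\vep n}$ of step patterns for $\vep$ small. Note also that the ``duality with two-dimensional oriented percolation'' you invoke is used in the paper only to bound $T_{o,-e_1}$ from above (by circumnavigating the finite half-site cluster), so it is not set up to produce the lower bound at the sharp threshold; without a reduction of the westernisation type, your (d)(ii) remains a plausible sketch rather than a proof.
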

To the best of our knowledge, the novel parts of this theorem (or more precisely, its proof) are (c)(i) and (d)(ii).   We expect that the other parts are standard, or at least well understood (see e.g.~\cite{ADH}).   However we have not seen a result such as (c)(ii) for dimensions  $d>2$.  To prove (c)(ii) we will use a supercritical oriented percolation fact, Lemma \ref{lem:OPhits} below, which is well known in two dimensions, but we have not found a statement of this result elsewhere when $d>2$.
 
\begin{REM}
	Part (a)(iii) of the Theorem allows one to extend the domain of $\zeta_p$ to $\R^d$ by taking limits through rationals.  It then follows from part (a) that the resulting function $\zeta_p$ is convex.
	\end{REM}

\begin{figure}
	\begin{center}
\begin{tikzpicture}[thick,scale=5]
\coordinate (A1) at (0,0);
\coordinate (A2) at (0.6,0.2);
\coordinate (A3) at (1,0);
\coordinate (A4) at (0.4,-0.2);
\coordinate (B1) at (0.5,0.5);
\coordinate (B2) at (0.5,-0.5);

\begin{scope}[thick,dashed,opacity=0.6]
\draw (A1) -- (A2) -- (A3);
\draw (B1) -- (A2) -- (B2);
\end{scope}
\draw[fill=cof,opacity=0.6] (A1) -- (A4) -- (B1);
\draw[fill=pur,opacity=0.6] (A1) -- (A4) -- (B2);
\draw[fill=greeo,opacity=0.6] (A3) -- (A4) -- (B1);
\draw[fill=greet,opacity=0.6] (A3) -- (A4) -- (B2);
\draw (B1) -- (A1) -- (B2) -- (A3) --cycle;
\end{tikzpicture}
\hspace{1cm}
\begin{tikzpicture}[thick,scale=5]
\coordinate (A1) at (0,0);
\coordinate (A2) at (0.6,0.2);
\coordinate (A3) at (1,0);
\coordinate (A4) at (0.4,-0.2);
\coordinate (B1) at (0.5,0.5);
\coordinate (B2) at (0.5,-0.5);

\coordinate (B1A3) at (0.55,0.45);
\coordinate (A4A3) at (0.46,-0.18);
\coordinate (A4B2) at (0.41,-0.23);
\coordinate (A1A2) at (0.06,0.02);
\coordinate (B1A2) at (0.51,0.47);
\coordinate (A1B2) at (0.05,-0.05);

\draw (B1A3)--(A4A3)--(A4B2)--(A1B2);

\draw[dashed,thick,opacity=0.6] (A1)--(A1A2);
\draw[dashed,thick,opacity=0.6]  (A1) -- (A2) -- (A3);
\draw[dashed,thick,opacity=0.6]  (B1) -- (A2) -- (B2);
\begin{scope}[thick,dotted,opacity=0.6]
\draw (A4A3) -- (A1A2) -- (B1A2)--(B1A3);
\end{scope}
\draw[fill=cof,opacity=0.6] (A1) -- (A4) -- (B1)--cycle;
\draw[opacity=0.6] (A1) -- (A4) -- (B1)--cycle;
\draw[fill=orange,opacity=0.6] (A1) -- (B1) -- (B1A2)--(A1A2)--cycle;
\draw[fill=gray,opacity=0.6] (A1) --(A1A2)--(A1B2)--cycle;
\draw[fill=pur,opacity=0.6] (A1) -- (A4) -- (A4B2)--(A1B2)--cycle;
\draw[fill=black,opacity=0.6]  (B1) -- (B1A3)--(B1A2)--cycle;
\draw[fill=greeo,opacity=0.6]  (A4) -- (B1)--(B1A3)--(A4A3)--cycle;
\draw[fill=greet,opacity=0.6] (A4B2) -- (A4) -- (A4A3)--cycle;
\end{tikzpicture}
\end{center}
\caption{An illustration of the region $\sg$ in 3 dimensions.  The $\ell_1$ ball in 3 dimensions appears on the left, with an example of $\sg$ depicted on the right. }
\label{fig:sg}
\end{figure}
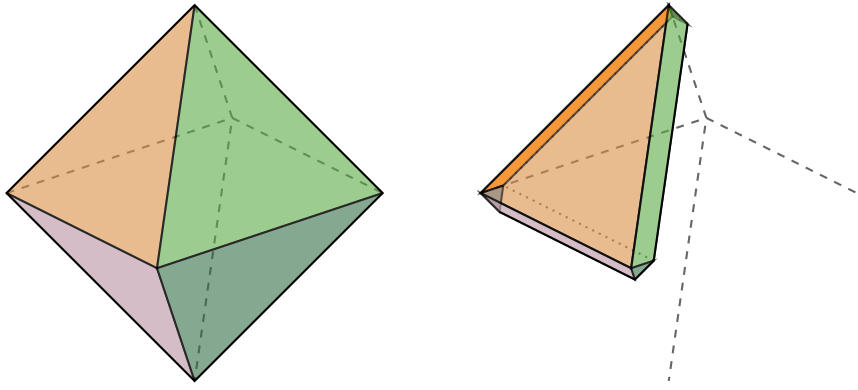

\begin{REM}[Backward connections]
\emph{One can also ask about backward clusters and connections to the origin.  
Let $\mc{B}_o$ denote the set of vertices from which the origin can be reached.  This is called the \emph{backward cluster} of the origin.  This set contains the negative orthant.  Let $\hat{T}_v=T_{v,o}$.  The subadditive ergodic theorem also applies to $\hat{T}_v$, and since $\hat{T}_v$ has the same distribution as $T_{-v}$ one can immediately infer a limit theorem (a.s.~ and in $L^1$) for $\hat{T}_v$ with ``shape'' given by $-\zeta_p(v)$.  }
\end{REM}

The remainder of this paper is organised as follows.  In Section \ref{sec:open} we present several open problems.  In Section \ref{sec:subadd} we prove Theorem \ref{thm:1}, by invoking the subadditive ergodic theorem.  The novelty therein is the proof of finiteness of the expectations in Sections \ref{sec:expected1} and \ref{sec:expected2}, while the consequences in Section \ref{sec:consequences} are standard.  The proof of Theorem \ref{thm:features} appears in Section \ref{sec:shape_props}.

\section{Open problems}
\label{sec:open}
Several open problems for the half-orthant model and other similar models are discussed e.g.~in \cite{DREphase,Shape}, and we do not repeat those here.  Instead we present some open problems regarding chemical distance  and related properties.   We start with the half-orthant model and then discuss other degenerate random environments.

\subsection{Half-orthant model}
\label{sec:open_half_orthant}
For dimensions $d>2$, Theorem \ref{thm:1} is not believed to be sharp.
\begin{OP}
Prove that the conclusion of \eqref{thm:1} holds when $p<p_c(d)$ (or $p<p'_c(d)$).
\end{OP}

Theorem \ref{thm:features} asserts directions $u$ in which vertices of the form $nm_uu$ can be reached from the origin in time of order $n\|m_u u\|_1$ (part (c) of the theorem), as well as directions in which this does not hold (part (d) of the theorem).  The two sets of directions do not exhaust all directions in $\DD$.  This observation motivates the following question.
\begin{OP}
	Does the subset of $\mc{S}_p$ presented in \ref{thm:features}(c) together with the positive orthant in fact contain all directions in $\mc{S}_p$? 
	\end{OP}

For $d\ge 2$ and $p>p_c(d)$, not all vertices are reachable from the origin.  Indeed in this setting, for ``most''  directions only finitely many points in that direction are reachable from the origin, so there is no interesting shape theorem in these directions.  On the other hand, as long as $p$ is strictly smaller than 1 there exists a neighbourhood of the positive orthant, such that for directions $u$ in this neighbourhood, $\limsup_{n \to \infty}(m_un)^{-1}T_{nm_uu}<\infty$.  For  directions in $\mc{S}_p^{\textrm{good}}$ one can show that $\limsup_{n \to \infty}(m_un)^{-1}T_{nm_uu}=1$.  However, for all $u\in \DD\setminus\DD_+$ it is the case that with positive probability $nm_u u$ is not reachable from the origin.  This means that the subadditive ergodic theorem (as stated herein) is not applicable.  Nevertheless it should be true that a shape theorem holds in directions for which (a.s.) $T_{nm_uu}$ is finite for all $n$ sufficiently large. 
\begin{OP}
	\label{op:half_orthant1}
Prove that $n^{-1}T_{nv}(p)\to \zeta_p(v)>0$ a.s.~for the half-orthant model in a non-trivial subset of directions when $p>p_c(2)$. 
\end{OP}
\begin{figure}
	\begin{center}
		\includegraphics[trim={1.5cm 1.5cm 1.5cm 1.5cm},clip,width=0.49\textwidth]{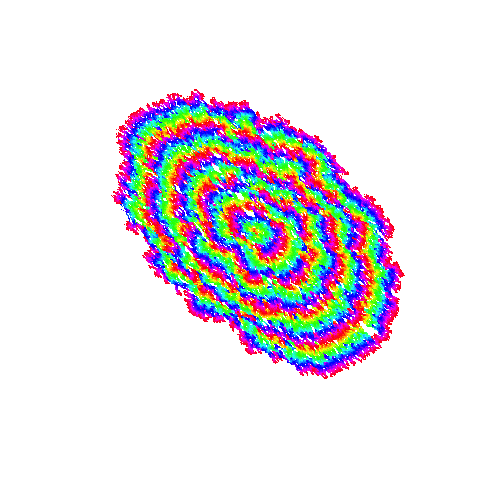}
		\includegraphics[trim={2cm 2cm 0 0},clip,width=0.49\textwidth]{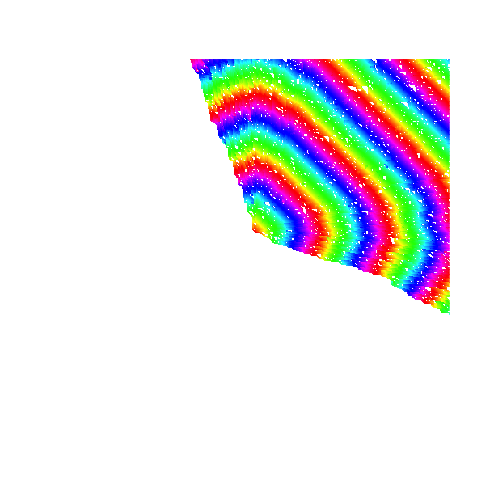}
	\end{center}
	\vspace{-1cm}
	\caption{Simulation of the set of points reachable in $n$ steps for the orthant model with $p=1/2$ (left) and $p=3/4$ (right).  White patches surrounded by colour are not reachable from the origin.}
	\label{fig:orthant}
\end{figure}
\subsection{Orthant model}
\label{sec:open_orthant}

For the orthant model (where full sites are replaced with half-sites of opposite orientation), in any direction $u\in \mathbb{Q}^2\setminus \{o\}$ there are a.s.~infinitely many points that are not reachable from the origin.  Thus the limit as presented in Theorem \ref{thm:1} fails in this model in every direction.  Nevertheless it is clear from e.g.~Figure \ref{fig:orthant} that there is still a shape to the set $A_n$ when $n$ is large.  This setting is similar to the setting of  first-passage percolation when passage times across edges can be infinite with positive probability (see e.g.~\cite{CerfTheret}).
\begin{OP}
	\label{op:orthant1}
Prove a shape theorem (e.g.~of the form in Corollary~\ref{cor:D}) for the orthant model in dimensions $d\ge 2$ in all directions when $p\in (1-p_c(2),p_c(2))$.  
\end{OP}
The next open problem is a version of Open Problem \ref{op:half_orthant1} suitable for the orthant model.  
\begin{OP}
	\label{op:orthant2}
	Prove a shape theorem (e.g.~of the form in Corollary \ref{cor:D}) for a non-trivial subset of directions when $p\notin  (1-p_c(2),p_c(2))$.  
\end{OP}

\subsection{Other models}
\label{sec:open_other}

Finally, there are many more examples of degenerate random environments where shape theorems are relevant and interesting.  An example that has a similar flavour to what has been presented in this paper is the case where we have just the arrow $e_1$ with probability $p$ and full sites otherwise.  
\begin{OP}
	\label{op:ogeneral}
	Investigate shape theorems in more general degenerate random environments. 
\end{OP}

All of the models mentioned above are partially oriented models of random media.  In the unoriented setting (the standard model being first passage percolation) one is also interested in features of shortest paths (or \emph{geodesics}) other than just their length.  One can ask similar questions in the setting of degenerate random environments.

\begin{OP}[Geodesics]
	\label{op:geodesics}
Investigate properties of the geodesics in some of these models.
\end{OP}
In this paper all edge weights (or transit times) have been set to 1.  One can generalise further to settings where the directed edges can also have random transit times associated to them.  This arises naturally from the study of random walks in (non-elliptic) random environments.  For example, one could take the transit time of a directed edge $e$ with weight $w(e)\in (0,1]$ as $1/w(e)$, with infinite transit time when $w(e)=0$.
\begin{OP}[Edge weights]
	\label{op:weights}
Generalise to situations where general weights are allowed on the directed edges.
\end{OP}

\section{Proof of Theorem \ref{thm:1}}
\label{sec:subadd}
Fix $v\in \Z^d \setminus \{o\}$, and let $X_{m,n}=T_{mv,nv}$ denote the minimum number of steps to reach $nv$, starting from $mv$.  We show that $X_{m,n}$ satisfies the assumptions of the subadditive ergodic theorem.
\begin{THM}[Kingman, Liggett]
	Let $(X_{m,n})_{0\le m<n}$ be a family of random variables satisfying the following:
	\begin{itemize}
		\item[(i)] $X_{0,n}\le X_{0,k}+X_{k,n}$ for all $0< k<n$;
		\item[(ii)]  the distribution of the sequence $(X_{m,m+k})_{k\ge 1}$ does not depend on $m$;
		\item[(iii)] for each $n$, $\E[|X_{0,n}|]<\infty$ and $\E[X_{0,n}]\ge cn$ for some constant $c>-\infty$;
		\item[(iv)] for each $k$, $(X_{nk,(n+1)k})_{n\ge 1}$ is a stationary and ergodic sequence.
	\end{itemize}
Then $X=\lim_{n \to \infty} X_{0,n}/n$ exists a.s. and in $L^1$, and $X$ is deterministic.
	\end{THM}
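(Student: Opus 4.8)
The final statement is the Kingman--Liggett subadditive ergodic theorem, and the plan is the classical two-sided argument: an easy upper bound obtained from Birkhoff's ergodic theorem applied to block sums, and a harder matching lower bound obtained from a stopping-time (``efficient block'') decomposition of the index set.

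I would first extract the constant. Put $a_n=\E[X_{0,n}]$; by (i) and (ii) one has $a_{m+n}\le a_m+a_n$, so Fekete's subadditive lemma yields $\gamma:=\lim_n a_n/n=\inf_n a_n/n$, and (iii) gives $c\le\gamma\le a_1$, so $\gamma$ is finite. It then suffices to prove $X_{0,n}/n\to\gamma$ a.s.\ and in $L^1$, which is stronger than the assertion that the limit exists and is deterministic. For the upper bound $\limsup_n X_{0,n}/n\le\gamma$ a.s., fix $k\ge 1$, write $n=qk+r$ with $0\le r<k$ (so $q\to\infty$, $q/n\to 1/k$), and iterate (i) to get $X_{0,n}\le\sum_{j=0}^{q-1}X_{jk,(j+1)k}+X_{qk,n}$ (with the convention $X_{m,m}=0$). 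By (iv) the sequence $(X_{jk,(j+1)k})_{j\ge0}$ is stationary, ergodic and integrable with mean $a_k$, so Birkhoff's ergodic theorem gives $\tfrac1q\sum_{j<q}X_{jk,(j+1)k}\to a_k$ a.s., hence the first term divided by $n$ tends to $a_k/k$; for the remainder, $X_{qk,n}^+\le M_q:=\max_{0\le r\le k}X_{qk,qk+r}^+$, and by (ii) the $M_q$ have a common distribution with $\E M_0\le\sum_{r\le k}\E[X_{0,r}^+]<\infty$, so $\sum_q\P(M_q\ge\vep q)\le\vep^{-1}\E M_0<\infty$ and Borel--Cantelli gives $M_q/q\to0$ a.s. Thus $\limsup_n X_{0,n}/n\le a_k/k$ a.s.\ for every $k$, and intersecting over $k$ and taking the infimum gives $\limsup_n X_{0,n}/n\le\gamma$ a.s.

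The lower bound is the main obstacle, and I would reduce it to showing $\E\!\left[\liminf_n X_{0,n}/n\right]\ge\gamma$: combined with the a.s.\ inequality $\liminf_n X_{0,n}/n\le\limsup_n X_{0,n}/n\le\gamma$ just proved, this forces $\liminf_n X_{0,n}/n=\gamma$ a.s., and then the $\limsup$ bound upgrades this to $X_{0,n}/n\to\gamma$ a.s. To establish the expectation bound I would follow Kingman's stopping-time device as organised in Liggett's proof. Writing $g_m=\liminf_n X_{m,n}/(n-m)$ (a.s.\ finite, a point that itself needs a short argument from the uniform integrability below), fix $\vep>0$ and a large $M$, let $\sigma(m)$ be the first $n>m$ with $X_{m,n}\le(g_m+\vep)(n-m)$ capped at $m+M$, and set $t_0=0$, $t_{i+1}=\sigma(t_i)$. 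Chaining $X_{0,t_N}\le\sum_{i<N}X_{t_i,t_{i+1}}$ expresses $X_{0,t_N}/t_N$ as a weighted average of block weights, each at most $(g_{t_i}+\vep)$ times the block length on the uncapped blocks, with a controlled (integrable) contribution from the capped blocks; by (ii) and (iv) the block lengths and weights form stationary ergodic sequences, so Birkhoff's theorem controls these averages, and comparing with $\E[X_{0,m}]\ge\gamma m$ through a Wald-type identity across the block boundaries yields $\E[g_0\wedge M]\ge\gamma-O(\vep)-(\text{truncation error})$. Letting $M\uparrow\infty$ and $\vep\downarrow0$ gives $\E[\liminf_n X_{0,n}/n]\ge\gamma$. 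The delicate points, which is where I expect essentially all the effort to go, are the a.s.\ finiteness and integrability bookkeeping for $g_0$, the legitimacy of the stationarity and ergodicity of this greedy decomposition under hypotheses (ii) and (iv) alone (no single measure-preserving transformation being assumed), and making the Wald-type comparison across random block boundaries honest.

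It remains to upgrade the a.s.\ convergence $X_{0,n}/n\to\gamma$ to $L^1$ and record that the limit is deterministic. From $X_{0,n}\le\sum_{j<n}X_{j,j+1}$ one gets $X_{0,n}^+/n\le n^{-1}\sum_{j<n}X_{j,j+1}^+$, and the right-hand side converges in $L^1$ by Birkhoff's $L^1$ ergodic theorem applied to the stationary ergodic integrable sequence $(X_{j,j+1})_{j\ge0}$ (the case $k=1$ of (iv)); hence $(X_{0,n}^+/n)_n$ is uniformly integrable. For the negative part, $X_{0,n}^-/n\to\gamma^-$ a.s.\ while $\E[X_{0,n}^-/n]=\E[X_{0,n}^+/n]-a_n/n\to\gamma^+-\gamma=\gamma^-$, so Scheff\'e's lemma gives $X_{0,n}^-/n\to\gamma^-$ in $L^1$; subtracting, $X_{0,n}/n\to\gamma$ in $L^1$. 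Since $\gamma$ is a constant, the limit $X=\gamma$ is in particular deterministic, which completes the proof.
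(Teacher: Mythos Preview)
The paper does not give a proof of this statement: it is quoted as the classical Kingman--Liggett subadditive ergodic theorem and then applied, with no argument supplied. So there is no ``paper's own proof'' to compare your attempt against.

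Your outline is the standard classical proof, and the pieces are in the right places. The upper bound via Birkhoff on $k$-blocks plus a Borel--Cantelli bound on the boundary term is correct as written. The lower bound sketch is the Liggett stopping-time device; you have correctly flagged exactly the points where the real work lies (finiteness and integrability of $g_0$, stationarity/ergodicity of the greedy block decomposition under (ii) and (iv), the Wald-type accounting). The $L^1$ upgrade via uniform integrability of $X_{0,n}^+/n$ and Scheff\'e on the negative part is also the standard route and is fine once a.s.\ convergence is in hand. In short, your proposal is a faithful high-level proof of the theorem; it simply goes well beyond what the paper does, which is nothing more than to invoke the result.
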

We apply this theorem to the random variables $X_{m,n}=T_{mx,nx}$ for arbitrary but  fixed $x\in \Z^d\setminus \{o\}$.  Conditions (ii) and (iv) in the theorem follow from the fact that the environment is i.i.d., while condition (i) is also straightforward since if $\gamma'$ is a path from $o$ to $kx$ that is consistent with the environment and $\gamma''$ is a path from $kx$ to $nx$ that is consistent with the environment then the concatenation of the two paths yields a path $\gamma$ from $o$ to $nx$ that is consistent with the environment.  Trivially $\E[T_{a,b}]\ge 0$ since $T_{a,b}\ge 0$.  Therefore, in order to apply the theorem, it remains to prove that $\E[|T_{o,nx}|]<\infty$ for every $n$.  Since we also want this for every $x$, our goal is to show that $\E[T_{o,x}]<\infty$ for every $x$.  Since every $x$ is a sum of $\|x\|_1$-many vectors $e\in \mc{E}$, it is in fact sufficient to show that $\E[T_{o,e}]<\infty$ for every $e\in \mc{E}$.  

For the half-orthant model, $T_{o,e}=1$ if $e\in \mc{E}_+=\{e_i:i \in [d]\}$ and by symmetry $\E[T_{o,-e_j}]=\E[T_{o,-e_1}]$ so we need only show that $\E[T_{o,-e_1}]<\infty$.  We will verify this in the following two sections (for fixed $d\ge 2$), in the cases $p<p_c(2)$ and $1-p>p_*(d)$ respectively.

\subsection{Finite expectation for $p<p_c(2)$}
\label{sec:expected1}

It is sufficient to prove the result in the case $d=2$, since in higher dimensions the time $T_{o,-e_1}$ is less than or equal to the time it takes to reach $-e_1$ from $o$ using only moves that stay in $\Z^2 \times \{0\}^{d-2}$.  So for the remainder of this section we restrict our attention to $d=2$.

Define a random set $\mathfrak{C}\subset \DD_{-,+}:=\{x\in \Z^2:x^{\sss[1]}\le 0, x^{\sss[2]}\ge 0\}$ as follows.  Let $\mathfrak{C}_0=\{o\}=\{(0,0)\}$.  

If the origin is a full site then let $\mathfrak{C}=\mathfrak{C}_0=\{o\}$.  

Otherwise the origin is a half site.  In this case let $\mathfrak{C}_1$ denote the set of $x\in \DD_{-,+}$ with $\|x\|_1=1$ such that $x$ is a half site and $\{x+e_1,x-e_2\}\cap  \mathfrak{C}_0\ne\varnothing$.  For $n\ge 2$, we define $\mathfrak{C}_n$ to be the set of $x\in \DD_{-,+}$ with $\|x\|_1=n$ such that $x$ is a half site and either $\{x+e_1,x-e_2\}\cap  \mathfrak{C}_{n-1}\ne\varnothing$ or $x+e_1-e_2\in  \mathfrak{C}_{n-2}$.  Let $\mathfrak{C}=\cup_{n\ge 0}\mathfrak{C}_n$.   

It follows that $\mathfrak{C}$ is the cluster of the origin for an oriented site percolation model on the triangular lattice with parameter $p$; sites are occupied if and only if they are half-sites, and the connections are in the three directions $-e_1,+e_2,-e_1+e_2$ of a triangular lattice.  If $p<p_c(2)\approx 0.59$ then since $p_c(2)$ is also the critical value for oriented site percolation on the triangular lattice \cite{DRE}, the cluster $\mathfrak{C}$ is finite. Thus $K:=\sup\{k:\mathfrak{C}_k\ne \varnothing\}<\infty$ almost surely.  Moreover (see e.g.~\cite[Section 7 (6)]{Dur84}) there exists $c(p)>0$ such that for all $n$, 
\begin{equation}
\label{eqn:size_of_tri_cluster}
	\P(|\mathfrak{C}(p)|>n)\le e^{-c(p)n}.
\end{equation}  

Define the set of \emph{awesome} points $\mathbb{A}$ by 
\begin{multline*}
	\mathbb{A}:=\{x\in \DD_{-,+}\cap \mathfrak{C}: x\text{ is a half site} \\ \text{but none of $x-e_1,x+e_2,x-e_1+e_2$ is a half site}\}.
\end{multline*}
Note that if $K\ge 1$ then $\mathfrak{C}_K\subset \mathbb{A}$  since every point in $\mathfrak{C}_K$ is a half site and $\mathfrak{C}_{K+1}$ and $\mathfrak{C}_{K+2}$ are empty.
\begin{LEM}
There is a path from $o$ to $-e_1$, consistent with the environment, of length at most $4|\mathfrak{C}|$.
\end{LEM}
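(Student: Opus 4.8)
The plan is to argue that, starting from the origin, one can follow arrows to reach $-e_1$ by exploiting the structure of the cluster $\mathfrak{C}$ of half-sites defined above, and to bound the number of steps by a constant multiple of $|\mathfrak{C}|$. The intuition is that from a half-site $x$ one may step in directions $-e_1$, $+e_2$, or $-e_1+e_2$ (using the fact that half-sites have all outward arrows), while full sites have all outward arrows too, so from a full site one may step in \emph{any} direction, including $+e_1$ and $-e_2$. The set $\mathfrak{C}$ is precisely the set of half-sites reachable from $o$ by following $\{-e_1,+e_2,-e_1+e_2\}$-steps through half-sites; its ``boundary'' consists of awesome points and of sites just outside $\mathfrak{C}$, which are full.

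First I would set up an explicit walk. If $o$ is a full site, then $\omega_o$ contains the arrow $-e_1$, so $T_{o,-e_1}=1\le 4|\mathfrak{C}|=4$ and we are done; so assume $o$ is a half-site, i.e.\ $K\ge 1$. Pick an awesome point $a\in\mathbb{A}$ (e.g.\ a point of $\mathfrak{C}_K$, which is nonempty and contained in $\mathbb{A}$ as noted). By definition of $\mathfrak{C}$ there is a path $\gamma^{\mathrm{in}}$ from $o$ to $a$ using only steps in $\{-e_1,+e_2,-e_1+e_2\}$ through half-sites, hence consistent with the environment; since every step decreases the index $n$ of the layer $\mathfrak{C}_n$ containing the current point by $1$ or $2$, the length of $\gamma^{\mathrm{in}}$ is at most $K\le|\mathfrak{C}|$. (One has to be slightly careful: a single $-e_1+e_2$ step drops the layer by $2$ but is one step, so in fact the path length is at most $K$, certainly at most $|\mathfrak{C}|$.)

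Next I would construct a path \emph{back} from the awesome point $a$ to $-e_1$. The key point is that $a-e_1$, $a+e_2$, $a-e_1+e_2$ are all \emph{not} half-sites, hence are full sites, so from each of them all four arrows are available. I would use this to route a path $\gamma^{\mathrm{back}}$ from $a$ to $-e_1$: step from $a$ to $a-e_1$ (available since $a$ is a half-site), then, now being at a full site with full freedom of movement, walk along any lattice path from $a-e_1$ to $-e_1$. The only remaining subtlety is that this return path might want to pass through half-sites in $\mathfrak{C}$ where $+e_1$ or $-e_2$ moves are unavailable. Here I would use that $\mathfrak{C}\subset\DD_{-,+}$ lies in the second quadrant and is finite: a crude but sufficient route is to first go from $a-e_1$ straight ``up and left'' out of any region that could obstruct, or—cleaner—observe that once at a full site one can move freely, and choose the return path to hug the complement of $\mathfrak{C}$, each step either staying at a full site or entering $\mathfrak{C}$ only at its ``outer'' boundary where the needed arrow exists. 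In any case the return path can be taken to have length $O(\mathrm{diam}(\mathfrak{C}))=O(|\mathfrak{C}|)$, and with a little care the constants combine to give total length at most $4|\mathfrak{C}|$.

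\textbf{Main obstacle.} The genuinely delicate step is the construction of the return path $\gamma^{\mathrm{back}}$: one must guarantee that every step it takes is consistent with the environment even though it may traverse half-sites (from which $+e_1$ and $-e_2$ are forbidden) and even though its intermediate vertices are only constrained through membership in $\mathfrak{C}$. I expect the clean way is an inductive/recursive argument on $K$: peel off the top layer by walking from $o$ into $\mathfrak{C}_1$ (or directly handling small $K$), use awesomeness to cross the boundary into full sites, and recombine; alternatively one exhibits the return path explicitly by following the boundary of $\mathfrak{C}$. Getting the bookkeeping to land on the clean constant $4$ (rather than some larger absolute constant) is the only real work, and it should follow from the fact that each layer $\mathfrak{C}_n$ is traversed a bounded number of times in each direction.
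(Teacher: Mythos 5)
Your proposal rests on a misreading of the model, and this breaks the argument at its first step. In the half-orthant model a \emph{half} site has only the arrows $+e_1,\dots,+e_d$ (the positive directions); it is the \emph{full} sites that have all $2d$ arrows. So from a half site the steps $-e_1$ and $-e_2$ are \emph{not} available, and $-e_1+e_2$ is not even a nearest-neighbour step. Consequently your path $\gamma^{\mathrm{in}}$ from $o$ to an awesome point ``using only steps in $\{-e_1,+e_2,-e_1+e_2\}$ through half-sites, hence consistent with the environment'' is not consistent with the environment at all, and neither is your first step of $\gamma^{\mathrm{back}}$ from the awesome point $a$ to $a-e_1$ (again $a$ is a half site, so $-e_1$ is forbidden there). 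The connectivity in the definition of $\mathfrak{C}$ (directions $-e_1,+e_2,-e_1+e_2$ on a triangular lattice) describes how the \emph{blocking} cluster of half sites propagates; it is a dual obstruction that a consistent path must go around, not a set of moves the walker can make. The role of an awesome point $x$ is precisely that its neighbours $x+e_2$, $x-e_1+e_2$ (and $x-e_1$) are full, so a path sitting at $x$ can detour around the missing westward arrow via the consistent steps $e_2,-e_1,-e_2$.

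The paper's proof has a genuinely different shape, which your outline does not recover: it is an induction on $n=|\mathfrak{C}|$. Given $|\mathfrak{C}|=n+1$, pick an awesome point $x$ (e.g.\ in $\mathfrak{C}_K$), change its status to a full site to get an environment $\bs{\omega}^x$ whose cluster has size $n$, and apply the inductive hypothesis to obtain a simple path $\gamma$ from $o$ to $-e_1$ of length at most $4n$ consistent with $\bs{\omega}^x$. Then repair $\gamma$ in the original environment: if $\gamma$ never uses a forbidden arrow at $x$ it is already consistent; if it exits $x$ via $-e_1$ insert the detour $e_2,-e_1,-e_2$ (two extra steps); if it exits via $-e_2$ insert $e_2,-e_1,-e_2,-e_2,e_1$ (four extra steps). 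This gives $4n+4=4(n+1)$ and, incidentally, is where the clean constant $4$ comes from --- it is not obtained by bounding a ``return path'' by $O(\mathrm{diam}(\mathfrak{C}))$, which in your sketch is left entirely unjustified (you yourself flag the consistency of $\gamma^{\mathrm{back}}$ through half sites as the main obstacle; in the correct model that obstacle is the whole content of the lemma). As written, your argument would need to be discarded rather than patched: the induction-plus-local-repair mechanism, not a two-leg ``in and back'' walk, is what makes the bound work.
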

\begin{proof}
We prove by induction on $n=|\mathfrak{C}|$ that there is a path of length at most $4n$ from $o$ to $-e_1$. 

If $|\mathfrak{C}|=1$ then either $o$ is a full site or $o$ is awesome.  In the former case there is a path of length 1 consistent with the environment $\bs{\omega}$, from $o$ to $-e_1$, consisting of the step $-e_1$.  In the latter case both $e_2$ and $e_2-e_1$ are full sites, so there is a path of length 3 consistent with the environment $\bs{\omega}$ from $o$ to $-e_1$ consisting of the steps $e_2,-e_1,-e_2$ in that order.  

Let $n\ge 1$ and assume the result is true for every possible cluster of size $n$.  Now suppose that we are given a cluster $\mathfrak{C}$ of size $|\mathfrak{C}|=n+1$.  Since this cluster is finite but non-empty, it necessarily contains an awesome point $x$ (in particular, since $n+1>1$ we have that $K\ge 1$ and  every point in $\mathfrak{C}_K$ is awesome).

Since $x$ is awesome, by changing the status of $x$ from half-site to full-site, we obtain a new environment (call it $\bs{\omega}^x$), and a new cluster $\mathfrak{C}^x$ of size $|\mathfrak{C}^x|=n$.  It follows from the induction hypothesis that there exists a path $\gamma$ consistent with the environment $\bs{\omega}^x$ from $o$ to $-e_1$ of length at most $4n$.   By deleting loops if necessary we may assume that $\gamma$ is simple, so it passes through each point at most once.  We will find a path $\gamma'$ in the original environment  $\bs{\omega}$ from $o$ to $-e_1$, by modifying $\gamma$.

Suppose that $\gamma$ does not pass through (i.e.~enter and exit) $x$.  Then the path $\gamma$ is also consistent with $\bs{\omega}$ so we take $\gamma'=\gamma$.  Otherwise $\gamma$ enters $x$ from some direction and exits from some other direction.  If $\gamma$ exits in the direction $e_1$ or $e_2$ then (since these arrows are still available from $x$ in the environment $\bs{\omega}$) $\gamma$ is still consistent with $\bs{\omega}$, so we take $\gamma'=\gamma$ again.  

If $\gamma$ exits $x$ in direction $-e_1$, then $\gamma_k=x$, and $\gamma_{k+1}=x-e_1$ for some $k< 4n$.  In this case we define $\gamma'$ by $\gamma'_j=\gamma_j$ for $j\le k$, $\gamma'_{k+1}=x+e_2$, $\gamma'_{k+2}=x+e_2-e_1$, $\gamma'_{k+3}=x+e_2-e_1-e_2$, and then $\gamma'$ follows the remainder of the path $\gamma$ from the point $x-e_1=x+e_2-e_1-e_2$.  All of the added steps are consistent with $\bs{\omega}$ since $x$ was awesome in this environment.  The resulting path $\gamma'$ (which may or may not be simple) has two extra steps than $\gamma$.

Similarly, if $\gamma$ exits $x$ in direction $-e_2$, then we set $\gamma'=\gamma$ until the hitting time of $x$, then we make $\gamma'$ take the steps $e_2,-e_1,-e_2,-e_2,e_1$ (all these steps are consistent with $\bs{\omega}$ since $x$ was awesome in $\bs{\omega}$) and then $\gamma'$ follows $\gamma$ from the point $x+e_2-e_1-e_2-e_2+e_1=x-e_2$.  In this case $\gamma'$ (which may or may not be simple) contains 4 more steps than $\gamma$.  

It follows that we have a path $\gamma'$ consistent with the environment $\bs{\omega}$, from $o$ to $-e_1$, whose length satisfies $|\gamma'|\le 4+|\gamma|\le 4+4n=4(n+1)$.  This completes the proof.
\end{proof}

We have verified that for every $x\in \Z^d$, $\E[T_x]<\infty$.   Applying the subadditive ergodic theorem then yields the following.
\begin{LEM}
	\label{lem:Zd}
The claim of Theorem \ref{thm:1} holds for $p<p_c(2)$.
\end{LEM}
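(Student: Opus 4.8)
The plan is to collect the ingredients already assembled and invoke the subadditive ergodic theorem. First, the preceding lemma together with the exponential tail bound \eqref{eqn:size_of_tri_cluster} gives, for $p<p_c(2)$, that $\E[T_{o,-e_1}]\le 4\,\E[|\mathfrak{C}(p)|]<\infty$. By the invariance of the model under permutations and sign changes of the coordinate axes, $\E[T_{o,-e_j}]=\E[T_{o,-e_1}]<\infty$ for every $j\in[d]$, while $\E[T_{o,e_j}]=1$; hence $\E[T_{o,e}]<\infty$ for every $e\in\mc{E}$.

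Next I would propagate this to arbitrary endpoints. Given $x\in\Z^d\setminus\{o\}$, write $x=e^{(1)}+\cdots+e^{(k)}$ with $k=\|x\|_1$ and each $e^{(j)}\in\mc{E}$, and put $y_i=e^{(1)}+\cdots+e^{(i)}$ with $y_0=o$. Concatenation of consistent paths gives the deterministic inequality $T_{o,x}\le\sum_{j=1}^k T_{y_{j-1},y_j}$ in $[0,\infty]$; since $T_{y_{j-1},y_j}$ has the same law as $T_{o,e^{(j)}}$ by translation invariance, $\E[T_{o,x}]\le\sum_{j=1}^k\E[T_{o,e^{(j)}}]<\infty$. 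In particular $\E[T_{o,nx}]<\infty$ for every $n\ge1$.

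Finally, fix $x\in\Z^d\setminus\{o\}$ and set $X_{m,n}=T_{mx,nx}$. The discussion preceding the previous lemma already verifies hypotheses (i), (ii) and (iv) of the Kingman--Liggett theorem, along with $\E[X_{0,n}]\ge0$; the previous paragraph supplies the remaining part of (iii), namely $\E[|X_{0,n}|]<\infty$ for each $n$. The theorem then produces a deterministic constant $\zeta_p(x):=\lim_{n\to\infty}n^{-1}T_{o,nx}$, the limit holding almost surely and in $L^1$. Since $T_{o,nx}\ge\|nx\|_1$ we get $\zeta_p(x)\ge\|x\|_1>0$, so $\zeta_p$ takes values in $\R_+$; for $x=o$ the statement is trivial with $\zeta_p(o)=0$. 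This yields the conclusion of Theorem \ref{thm:1} for $p<p_c(2)$.

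The genuine obstacle, namely finiteness of the expected passage time $\E[T_{o,-e_1}]$, has already been surmounted in the previous lemma via the duality with subcritical oriented site percolation on the triangular lattice together with \eqref{eqn:size_of_tri_cluster}; what remains here is the routine bookkeeping of lifting finiteness from the unit vectors $\mc{E}$ to all of $\Z^d$ via the triangle inequality and confirming that all four hypotheses of the subadditive ergodic theorem hold.
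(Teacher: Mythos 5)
Your proposal is correct and follows essentially the same route as the paper: it combines the path-of-length-at-most-$4|\mathfrak{C}|$ lemma with the exponential tail bound \eqref{eqn:size_of_tri_cluster} to get $\E[T_{o,-e_1}]<\infty$, lifts this to all of $\Z^d$ by decomposing into unit steps and using translation invariance, and then invokes the Kingman--Liggett theorem exactly as the paper does. One tiny slip: the half-orthant model is not invariant under sign changes of the axes (half sites only carry positive arrows), but permutation invariance alone gives $\E[T_{o,-e_j}]=\E[T_{o,-e_1}]$, which is all you use.
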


\subsection{Finite expectation for $1-p>p_*(d)$}
\label{sec:expected2}
Lemma \ref{lem:Zd} and the follow lemma together imply that Theorem \ref{thm:1} holds.
\begin{LEM}
	\label{lem:Zd2}
	The claim of Theorem \ref{thm:1} holds for $1-p>p_*(d)$.
\end{LEM}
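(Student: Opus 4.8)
The plan is to reduce, as in Section~\ref{sec:expected1}, to showing $\E[T_{o,-e_1}]<\infty$, and to do this by exhibiting an explicit (random) path from $o$ to $-e_1$ built out of two ingredients: short deterministic detours and long journeys along a supercritical percolation cluster of full sites. Since $1-p>p_*(d)$, the set of \emph{full sites} stochastically dominates supercritical Bernoulli site percolation on $\Z^d$, so a.s.~there is a unique infinite cluster $\mathcal{F}$ of full sites, and moreover one has the standard quantitative controls: the probability that $o$ lies within distance $r$ of $\mathcal{F}$ but is not connected to it inside a box of side $Cr$ decays (stretched-)exponentially, and the chemical distance within $\mathcal{F}$ between two connected points at Euclidean distance $r$ is $O(r)$ with exponentially small deviation probabilities (Antal--Pisztora). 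First I would quote these facts in the form I need.

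The construction: from $o$, walk one step in direction $e_1$ (always available), then hunt for a full site. More carefully, fix a large constant $R$ and look in a box of side $R$ around $o$: with probability bounded below (and failure probability decaying in $R$) this box contains a point $y_1$ of $\mathcal{F}$, and $o$ (or $o+e_1$, using the available forward arrow to get off a possibly-bad site) can be connected to $y_1$ by a consistent path of length $O(R)$ — here I need a small lemma that from any half site one can reach any specified neighbouring \emph{full} site: step $e_i$ to the full site, then the full site can step anywhere, so one reaches the target in at most a bounded number of steps via a full-site intermediary; iterating, a path in the full-site lattice $\Z^d$-graph is traversable in $O(\text{length})$ consistent steps as long as it stays on full sites (and one can always exit a half site forwards). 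Then travel within $\mathcal{F}$ from $y_1$ to a full site $y_2$ lying in a box of side $R$ around $-e_1$, in $O(\|{-e_1}-o\|+R)=O(R)$ consistent steps by Antal--Pisztora, and finally make a bounded detour from $y_2$ to $-e_1$ (again routing through full neighbours, using that forward arrows are always present to escape any half site encountered). On the event that everything succeeds we get a consistent path of length $O(R)$; on the complementary (low-probability) event we fall back to a crude bound.

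The bookkeeping step is to turn ``low-probability event'' into an integrable bound on $T_{o,-e_1}$. The clean way is a renormalisation/restart argument: if the attempt in the box of side $R$ fails — either there is no full cluster point nearby, or $o$ fails to connect to it inside the box, or the within-cluster distance is anomalously large — then one moves forward (e.g.~by one unit in direction $e_1$, which is always legal) and tries again at a fresh, well-separated box, so the number of attempts is dominated by a geometric random variable with parameter close to $1$, each attempt costing $O(R)$ plus the cost of the forward displacement that must eventually be undone; alternatively, one sums $\P(T_{o,-e_1}>n)\le \P(\text{no success at scale }n)$, and the scale-$n$ failure probability is summable by the percolation estimates. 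Either way $\E[T_{o,-e_1}]<\infty$, and then the subadditive ergodic theorem applies exactly as in Section~\ref{sec:subadd}, giving Theorem~\ref{thm:1} for $1-p>p_*(d)$.

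I expect the main obstacle to be the non-ellipticity: a path that is ``geometrically short'' in $\Z^d$ need not be consistent, because half sites only emit forward arrows, so one must be careful that every detour and every cluster-traversal step is actually realisable. The device that resolves this is precisely that \emph{full sites are omnidirectional} — so staying on (or adjacent to) the supercritical full-site cluster makes backward progress possible — combined with the trivial observation that forward arrows are always available, which lets one always take one free step to leave a bad site. Making the ``travel along $\mathcal{F}$ costs $O(1)$ per edge'' claim precise (one must sometimes step off the cluster onto a neighbour and back) is the one place requiring a genuine, if short, argument; everything else is a standard Borel--Cantelli/renormalisation assembly using off-the-shelf supercritical percolation inputs.
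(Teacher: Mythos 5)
Your overall strategy --- reduce to showing $\E[T_{o,-e_1}]<\infty$, then route the journey through the supercritical cluster of full sites and control its length by the Antal--Pisztora chemical-distance bound --- is the same as the paper's. But there is a genuine gap at exactly the place you flag as ``requiring a genuine, if short, argument'': getting \emph{onto} and \emph{off} the cluster. Your ``small lemma'' (from any half site one can reach any specified neighbouring full site in a bounded number of steps via a full-site intermediary) is false as stated: if the target full site is at $x-e_1$, the half site $x$ can only exit forwards, and none of its forward neighbours need be full, so reaching a backward neighbour may take an unbounded random number of steps --- essentially the very quantity $T_{o,-e_1}$ you are trying to bound. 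The same problem infects both connection steps of your construction: the cluster point $y_1$ found in a box of side $R$ around $o$ may lie in a backward direction from $o$, and $-e_1$ may lie in a backward direction from $y_2$, so the claimed ``consistent path of length $O(R)$'' is unjustified; non-ellipticity means geometric proximity does not give consistent connectivity, and no percolation estimate you quote addresses this. The restart scheme has a related circularity: the forward displacement accumulated over failed attempts ``must eventually be undone'', but undoing it is again a backward journey of the type being estimated (and boxes shifted by one unit are not fresh, so the geometric-number-of-attempts domination also needs repair).

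The paper removes the need for any access/exit lemma by a careful choice of where to enter and leave the cluster: with $\mc{P}$ the infinite cluster of full sites, set $N^+=\inf\{n\ge 0:ne_1\in\mc{P}\}$ and $N^-=\inf\{n>0:-ne_1\in\mc{P}\}$, walk from $o$ to $N^+e_1$ using only $+e_1$ steps (always available), follow a path of full sites (automatically consistent) from $N^+e_1$ to $-N^-e_1$, then walk from $-N^-e_1$ to $-e_1$ again using only $+e_1$ steps; thus every step taken off the cluster is a forward step. This gives $T_{o,-e_1}\le N^++N^-+M$, where $N^\pm$ have exponential tails (this is a separate percolation input, Lemma \ref{lem:siteperc}, proved via slab percolation for $d\ge3$ and a planar duality argument for $d=2$; ``a box of side $R$ around $o$ meets $\mc{P}$ with high probability'' is weaker and does not yield a consistent access path), and $\P(M>n,\,N^++N^-\le\sqrt{n})$ is summable by Antal--Pisztora. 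If you replace your boxes and detours by these axis hitting points, your outline becomes the paper's proof; as written, the access and exit steps do not go through.
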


 Let $\P^*_q$ denote the law of site percolation on $\Z^d$, with parameter $q$.  
For supercritical site percolation (i.e.~with $q>p_*(d)$) let $\mc{P}$ denote the (unique) infinite cluster, and let 
$N^+=\inf\{n\ge 0:ne_1\in \mc{P}\}$.
We will use the following result, which is well-understood (but we have not seen an explicit statement in the literature).   

\begin{LEM}
\label{lem:siteperc}
Fix $d\ge 2$.  For $q>p_*(d)$ there exist $c,C>0$ such that for all $n>0$,
\[\P^*_q(N^+>n)\le Ce^{-cn}.\]
\end{LEM}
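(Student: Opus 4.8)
The plan is to establish exponential decay of the tail of $N^+$, the first index $n$ such that $ne_1$ lies in the unique infinite cluster $\mc{P}$ of supercritical site percolation on $\Z^d$ with parameter $q>p_*(d)$. The key point is that $N^+ > n$ requires that none of the sites $o, e_1, 2e_1, \dots, ne_1$ belong to $\mc{P}$; in particular the connected component of $o$ in the percolation configuration must avoid all of these, so it is a \emph{finite} cluster (or at least a cluster not equal to $\mc{P}$) that nonetheless somehow separates or simply fails to touch a string of $n+1$ collinear sites. The cleanest route is to use the well-known fact that in the supercritical phase the \emph{cluster-size tail} decays exponentially: there exists $c_1=c_1(q,d)>0$ with $\P^*_q(|\mc{C}(o)|=m, \, |\mc{C}(o)|<\infty)\le e^{-c_1 m}$, where $\mc{C}(o)$ is the open cluster of $o$. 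This is Theorem 8.21 (or thereabouts) in Grimmett's \emph{Percolation}, valid for all $q>p_c^{\mathrm{site}}(\Z^d)=p_*(d)$ by the Menshikov/Aizenman--Barsky sharpness results together with the supercritical cluster-size estimate.

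First I would reduce to a statement about the cluster of $o$. If $N^+>n$ then $ke_1\notin\mc{P}$ for every $k\in\{0,1,\dots,n\}$, hence the cluster $\mc{C}(ke_1)$ is finite for each such $k$ (the infinite cluster being unique a.s.). Now I would run the following covering/renewal argument along the positive $e_1$-axis. Consider the sites $o,e_1,\dots,ne_1$ one at a time. If $o\notin\mc{P}$, then $\mc{C}(o)$ is finite; since $\mc{C}(o)$ is connected and finite, it can contain only finitely many of the axis points, say it contains $o,e_1,\dots,(j-1)e_1$ but not $je_1$ for some $j\ge 1$ — and necessarily $j\le |\mc{C}(o)|$ since a connected set of size $m$ can contain a run of at most $m$ collinear consecutive integer points. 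Then restart the argument at $je_1$: its cluster $\mc{C}(je_1)$ is again finite, covers a run of at most $|\mc{C}(je_1)|$ consecutive axis points starting at $je_1$, and so on. Iterating, the event $\{N^+>n\}$ forces the existence of a sequence of finite clusters $\mc{C}(x_1),\mc{C}(x_2),\dots,\mc{C}(x_r)$ with $x_1=o$, each $x_{i+1}$ the first axis point not in $\mc{C}(x_i)$, and $\sum_{i=1}^r |\mc{C}(x_i)|\ge n+1$.

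The main technical obstacle is the dependence between these successive clusters: $\mc{C}(x_{i+1})$ need not be independent of $\mc{C}(x_i)$ because they are explored from points that are close together, and a priori they could even overlap. I would handle this exactly as one controls chains of finite clusters in the classical proofs: condition sequentially. Having explored $\mc{C}(x_1),\dots,\mc{C}(x_i)$ (a finite, measurable region of the configuration), the point $x_{i+1}$ is determined, it is adjacent to a \emph{closed} site on the explored boundary, and conditionally on the exploration so far the restriction of the configuration to the unexplored sites is still i.i.d.\ Bernoulli$(q)$. Hence conditionally $\mc{C}(x_{i+1})$ is stochastically dominated by (in fact equal in law to, on the unexplored part) an ordinary percolation cluster, and on the event that it is finite its size has the exponential tail $\P^*_q(m\le|\mc{C}(o)|<\infty)\le e^{-c_1 m}$ uniformly. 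A standard branching/generating-function or direct union-bound estimate then gives, for the total mass, $\P^*_q\big(\sum_{i=1}^r|\mc{C}(x_i)|\ge n+1 \text{ for some } r\big)\le Ce^{-cn}$ for suitable $c,C>0$: roughly, writing $S_i=|\mc{C}(x_i)|$, the partial sums $S_1+\dots+S_r$ behave like a random walk with i.i.d.-dominated increments having exponential tails, so the probability it takes $n+1$ steps to exceed $n$ — i.e.\ never jumps far but keeps being restarted — decays exponentially in $n$. Concretely one can bound $\P^*_q(N^+>n)\le \P^*_q\big(\sum_{i\ge1}S_i\indic{x_i\text{ defined, }\le n}\ge n+1\big)$ and apply an exponential Markov inequality with $\E[e^{\lambda S}\indic{S<\infty}]<\infty$ for small $\lambda>0$. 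This yields the claimed bound $\P^*_q(N^+>n)\le Ce^{-cn}$, completing the proof; Lemma~\ref{lem:siteperc} then feeds directly into the proof of Lemma~\ref{lem:Zd2}.
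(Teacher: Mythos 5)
There is a genuine gap, on two counts. First, your key quantitative input is false: in the \emph{supercritical} phase the finite-cluster volume tail does not decay exponentially. For $q>p_*(d)$ one has $\P^*_q\big(m\le |\mc{C}(o)|<\infty\big)=\exp\big(-\Theta(m^{(d-1)/d})\big)$ (surface-order, Wulff-type behaviour); the exponential bounds of Menshikov/Aizenman--Barsky you allude to are subcritical statements. Consequently $\E[e^{\lambda S}\indic{S<\infty}]=\infty$ for every $\lambda>0$ and your exponential Markov step collapses. What \emph{is} exponentially decaying in the supercritical phase is the \emph{radius} of a finite cluster, $\P^*_q\big(o\leftrightarrow \partial B(m),\,|\mc{C}(o)|<\infty\big)\le e^{-cm}$, and that is the quantity actually relevant to runs of consecutive axis points; but for $d\ge3$ this itself rests on slab (Grimmett--Marstrand type) technology, i.e.\ on essentially the input the paper uses directly, so it is not a cheaper route.

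Second, and more fundamentally, the sequential-conditioning step does not give the domination you claim. After exploring $\mc{C}(x_1),\dots,\mc{C}(x_i)$ you have revealed not only open clusters but also their \emph{closed} outer boundaries, and conditioning on these makes ``the next cluster is finite'' more likely, not less: a single large finite cluster (or its closed boundary) explored earlier can enclose a long stretch of later axis points, whose clusters are then finite with conditional probability $1$ while each contributes only a run of length $1$. So there is no uniform per-renewal bound of the form $\P(\text{finite and run}\ge m\mid\text{past})\le e^{-cm}$, and the product/renewal estimate does not close; the cost of such a configuration sits in one early large-radius finite cluster, which your decomposition does not isolate. This trapping phenomenon is exactly what the paper's proof is built to handle: for $d\ge3$ it tiles the axis segment by $n$ \emph{disjoint} slabs, in each of which the corresponding axis point independently connects to infinity within the slab with probability at least $a>0$, giving $(1-a)^n$ directly; for $d=2$ it passes to the matching lattice, where the exterior boundary of the union of the finite clusters of $[0,ne_1]$ is a connected \emph{closed} set of diameter at least $n$, and closed sites are subcritical on the matching lattice since $p_*(2)+p_c(L)=1$. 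Your argument would need either the disjointness/independence structure of the slab proof or a boundary (duality) argument of this kind to control the dependence you wave at with ``stochastic domination''.
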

\begin{proof}(Supplied by G.~Grimmett)
First assume $d \ge 3$, and let $q>p_*(d)$.  Let $S_k = [1,k]\times \Z^{d-1}$ be the slab of thickness $k$.  By \cite{GM} there exists $k_0$ such that $q>p_*(S_{k_0})$ (the critical probability for percolation in the slab).   Let $\P^0_q$ denote the law of site percolation in this slab.  Then there exists $a>0$ 
such that $ \P^0_q(re_1 \leftrightarrow \infty) >a$ (for every $r \in [1,k_0]$).  The interval $J_n=[1,nk_0e_1]$ traverses $n$ disjoint copies of $S_{k_0}$.  The contents of these slabs are independent (since they are vertex disjoint), so that $\P^*_q(N^+>mk_0)\le \P^*_q(J_m \nleftrightarrow \infty)\le (1-a)^m$.  For $n \in ((m-1)k_0,mk_0]$ we then have
\[\P^*_q(N^+>n)\le ((1-a)^{1/k_0})^n (1-a)^{-1},\]
which proves the result.

Now take $d = 2$ and $q>p_*(2)$. 
If $I_n=[0,ne_1]$ is not joined to $\infty$, then all
points in $I_n$ are in finite open clusters. Let $\mc{J}$ be the union of these clusters, and let $\mc{K}$ be the exterior site boundary of $\mc{J}$ in $\Z^2$.  Let $L$ be the matching lattice of $\Z^2$ (i.e.~$\Z^2 $ with all diagonals added).   
It is 
``standard'' that $\mc{K}$ is connected in $L$.   Since $q>p_*(2)$, and $p_*(2)+ p_c(L) = 1$ (see e.g.~\cite{GL22}), the closed sites are subcritical for site percolation on $L$, and therefore the diameter of the closed cluster containing $(-m,0)$ has exponential tail.  Thus there exist $C,c>0$ such that for all $n$, the probability that the diameter of the (closed) cluster of $(-m,0)$ in $L$ exceeds $n$ is at most $Ce^{-cn}$.   Now note that $\mc{K}$ is (part of) a closed cluster in this subcritical site model.  Let $(-b,0)$ be the leftmost vertex of 
$\mc{K}$ on the axis. Then the closed cluster of $(-b,0)$ in $L$ has diameter
at least $b+n$.  It follows that 
$\P^*(I_n \nleftrightarrow \infty) \le \sum_{b\ge 0}  C e^{-c(n+b)}$ as required.
\end{proof}

\begin{proof}[Proof of Lemma \ref{lem:Zd2}]
Fix $d\ge 2$, and $1-p>p_*(d)$.  Then the full sites percolate in the sense of site percolation on $\Z^d$.  Let $\mc{P}$ denote the infinite cluster (in the sense of site percolation) of the set of full sites.  Let $N^+=\inf\{n\ge 0:ne_1\in \mc{P}\}$, and let $N^-=\inf\{n>0:-ne_1\in \mc{P}\}$.   Let $\Gamma_N$ denote any shortest path from $N^+e_1$ to $-N^-e_1$, and let $M=|\Gamma_N|$.  Such a path exists because there is a path from one to the other that consists only of full sites.

Now consider the path $\Gamma$ from the origin to $-e_1$ as follows.  Starting from $o$, the path takes $N^+$ steps in the direction $e_1$ to the point $N^+e_1$.  It then follows the path $\Gamma_N$ from $N^+e_1$ to $-N^{-}e_1$.  It then takes $N_--1$ steps in direction $e_1$ to reach the point $-e_1$.  This entire path is consistent with the environment and the total length is at most $N^+ + M+N^-$.  Thus, $T_{o,-e_1}\le N^+ +N^- + M$ and it suffices to show that the three random variables on the right all have finite expectation.  Finiteness of the first two expectations follows from Lemma \ref{lem:siteperc} (or indeed from 
the ergodic theorem as $\E[N^+]$ and $\E[N^-]$ are both at most $1/\P(o\in \mc{P})$).  It remains to show that $\E[M]<\infty$. 

For $n\in \N$,
\begin{align}
\P(M>n)&\le \P\Big(M>n,N^++N^-\le \sqrt{n}\Big)\label{goat-1}\\
&\quad +\P\Big(N^+>\frac{\sqrt n}2\Big)+\P\Big(N^->\frac{\sqrt n}2\Big).\label{goat0}
\end{align}
The last two terms are summable in $n$ by Lemma \ref{lem:siteperc}.   It remains to show that the first term on the right of \eqref{goat-1} is summable in $n$.
This term is at most 
\begin{align}
&\sum_{n_+=0}^{\sqrt{n}}\sum_{n_-=1}^{\sqrt{n}}
\P^*_{1-p}\Big(n_+e_1\leftrightarrow -n_-e_1,T^*_{n_+e_1,-n_-e_1}>n\Big)\nn\\
&=\sum_{n_+=0}^{\sqrt{n}}\sum_{n_-=1}^{\sqrt{n}}
\P^*_{1-p}\Big(o\leftrightarrow (n_++n_-)e_1,T^*_{o,(n_++n_-)e_1}>n\Big)\label{goat}
\end{align}
where now we are only considering connections using full sites.  We now use the argument of \cite[Theorems 1.1 and 1.2]{AP96}.\footnote{This paper proves the result for bond percolation, but also points out that the same arguments also prove the result for site percolation.}  Specifically we use the inequality \cite[(4.49)]{AP96} with the box radius $N$ therein chosen so that \cite[(4.47)]{AP96} holds, and setting $y$ in \cite[(4.49)]{AP96} equal to $me_1$.  Note that in \cite[(4.49)]{AP96} their $l$ is our $n$, while their $n$ is equal to $|\boldsymbol{a}(y)|\le |y|=m$.  So  \cite[(4.49)]{AP96} together with Markov's inequality applied to $e^{h(|\boldsymbol{C_0}^*|+1)}$ therein gives the existence of constants $C^*,c^*>0$ such that for all $m,n\in \N$,
\begin{align*}
\P^*_{1-p}(o \leftrightarrow me_1,T^*_{o,me_1}>n)\le C^*m e^{-c^* n/m}.
\end{align*}
Setting $m=n_++n_-\le 2\sqrt{n}$ it follows that there exist $a,b>0$ such that for all $n\in \N$, \eqref{goat} is at most 
\begin{align*}
an\sqrt{n} e^{-b \sqrt{n}}.
\end{align*}

This proves that $\P(M>n)$ is summable in $n$, so $\E[M]<\infty$, which completes the proof.
\end{proof}

\subsection{Consequences}
\label{sec:consequences}
Before turning to the proof of Corollary \ref{cor:D}, 
we first establish the following extension of Theorem \ref{thm:1}.  Recall that for $y\in \R^d$, $[y]\in \Z^d$ denotes the unique lattice point such that $y\in [y]+[0,1)^d$.
\begin{LEM}\label{lem:limit_through_reals}
	Fix $d\geq2$ and $1-p > \min\{1-p_c(2),p_*(d)\}$. Then for every $u\in\Q^d$,
	\begin{equation}
	\lim_{x\to\infty} \frac{T_{[xu]}}{x} = \zeta_p(u), \quad \text{almost surely, and in $L^1$,}
	\end{equation}
	where we are taking $x$ through $\mathbb{R}$ rather than just $\mathbb{Z}$.	
\end{LEM}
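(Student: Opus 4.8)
The plan is to deduce Lemma \ref{lem:limit_through_reals} from the convergence along the arithmetic subsequence $x=nm_u$, $n\in\N$, already furnished by Theorem \ref{thm:1} and \eqref{zetadef}, by sandwiching $T_{[xu]}$ between passage times to the nearby lattice points $nm_uu$ and controlling the resulting bounded-range corrections uniformly in $x$. Throughout we work in the regime $1-p>\min\{1-p_c(2),p_*(d)\}$; as established in the course of proving Theorem \ref{thm:1} (Section \ref{sec:subadd}), in this regime a.s.\ $\mc C_o=\Z^d$, every $T_{u,v}$ is a.s.\ finite, and $\E[T_{u,v}]<\infty$, so $T$ obeys the triangle inequality via concatenation of consistent paths. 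For real $x\ge m_u$ put $n=n(x):=\lfloor x/m_u\rfloor$, so that $nm_u\le x<(n+1)m_u$ and $x=nm_u+t$ with $t=t(x)\in[0,m_u)$; since $nm_uu\in\Z^d$, the defining property of $[\,\cdot\,]$ gives $[xu]=nm_uu+[tu]$, so $[xu]=nm_uu+w$ with $w=w(x):=[tu]$ belonging to the \emph{finite} set $W:=\{[tu]:t\in[0,m_u)\}$ (each element of $W$ has $\ell_\infty$-norm at most $m_u\|u\|_\infty$).

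Concatenating consistent paths yields, for every $x\ge m_u$, with $n=n(x)$ and $w=w(x)$,
\[
T_{nm_uu}-T_{nm_uu+w,\;nm_uu}\ \le\ T_{[xu]}\ \le\ T_{nm_uu}+T_{nm_uu,\;nm_uu+w}.
\]
Both correction terms are at most
\[
R_n:=\max_{w'\in W}\bigl(T_{nm_uu,\;nm_uu+w'}+T_{nm_uu+w',\;nm_uu}\bigr),
\]
and by translation invariance $R_n\overset{d}{=}R_0:=\max_{w'\in W}\bigl(T_{o,w'}+T_{w',o}\bigr)$, which is integrable because $W$ is finite and each $\E[T_{o,w'}]$ and $\E[T_{w',o}]=\E[T_{o,-w'}]$ is finite.

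Since the $R_n$ are identically distributed and integrable, $\sum_{n\ge1}\P(R_n>\vep n)=\sum_{n\ge1}\P(R_0>\vep n)\le\vep^{-1}\E[R_0]<\infty$ for every $\vep>0$, so Borel--Cantelli gives $R_n/n\to0$ a.s.; since $x\ge nm_u\ge n$, the corrections divided by $x$ tend to $0$ a.s.\ as $x\to\infty$, and also in $L^1$ because $\E[R_{n(x)}/x]\le\E[R_0]/n(x)\to0$. On the other hand Theorem \ref{thm:1} and \eqref{zetadef} give $A_n:=T_{nm_uu}/(nm_u)\to\zeta_p(u)$ a.s.\ and in $L^1$, while $c(x):=n(x)m_u/x\to1$ deterministically. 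Writing $T_{[xu]}/x=A_{n(x)}c(x)+(\text{correction})/x$ and, for the $L^1$ claim, bounding $|A_nc-\zeta_p(u)|\le|A_n-\zeta_p(u)|+\zeta_p(u)\,|c-1|$, we conclude $T_{[xu]}/x\to\zeta_p(u)$ both a.s.\ and in $L^1$.

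The only step that goes beyond a direct appeal to Theorem \ref{thm:1} is the a.s.\ control of the boundary corrections \emph{uniformly over the continuum} $x\in[nm_u,(n+1)m_u)$; this is precisely what the Borel--Cantelli argument above supplies, using only that the $R_n$ are identically distributed with finite mean. The remaining steps are routine bookkeeping.
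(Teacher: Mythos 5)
Your proposal is correct and follows essentially the same route as the paper: sandwich $T_{[xu]}$ between passage times to the nearby lattice point on the arithmetic progression where Theorem \ref{thm:1} applies, and kill the bounded-displacement corrections using finiteness of $\E[T_{o,w}]$ together with a Markov/Borel--Cantelli tail-sum argument, exactly as in the paper's treatment of $T_{\lfloor x\rfloor v,[xv]}$. The only differences are cosmetic bookkeeping: you take a single maximum $R_n$ over the finite offset set $W$ and work with $u\in\Q^d$ directly via $n=\lfloor x/m_u\rfloor$, whereas the paper bounds the correction by a coordinatewise sum over a box around $\lfloor x\rfloor v$ for $v\in\Z^d$ and lifts to $\Q^d$ by scaling at the end.
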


\begin{proof}
The claim is trivial at $u=o$.   Otherwise we first we first restrict to $\mathbb{Z}^d$ and then at the end generalise to $\mathbb{Q}^d$.	Suppose that $v\in \Z^d\setminus \{o\}$.  
	By subadditivity, 
	\begin{align}
	T_{[xv]} &\leq T_{\lfloor x \rfloor v} + T_{\lfloor x \rfloor v, [xv]} \\
	T_{\lfloor x \rfloor v} &\leq T_{[xv]} + T_{[xv], \lfloor x \rfloor v}.
	\end{align}
	Thus
	\begin{equation}
	T_{\lfloor x \rfloor v} - T_{[xv],\lfloor x \rfloor v} \leq T_{[xv]} \leq T_{\lfloor x \rfloor v} + T_{\lfloor x \rfloor v,[xv]}.
	\end{equation}
	Now note that by Theorem \ref{thm:1}
	\begin{equation}
	\frac{T_{\lfloor x \rfloor v}}{x} = \frac{\lfloor x \rfloor}{x} \cdot \frac{T_{\lfloor x \rfloor v}}{\lfloor x \rfloor} \to \zeta_p(v) \quad \text{as $x\to\infty$, a.s.~and in $L^1$.}
	\end{equation}
	Thus, to prove the claim on $\Z^d$ it remains to show that almost surely
	\begin{equation}
	\lim_{x\to\infty} \frac{T_{[xv],\lfloor x \rfloor v}}{x} = \lim_{x\to\infty} \frac{T_{\lfloor x \rfloor v,[xv]}}{x} = 0,
	\end{equation}
	and that the limits of the  corresponding expected values are also 0.
	
	We will explain the argument in detail for $d=2$ and then explain how to generalise to $d>2$. 
	Let $s_i = \sgn(v^{\sss[i]})$.
	Define the set of vertices
	\begin{equation}
	B_x(v) := \big\{\lfloor x \rfloor v +is_1e_1 + js_2e_2 : i,j \in \mathbb{Z} \text{ and } 0 \leq i \leq |v^{\sss[1]}|, 0 \leq j \leq |v^{\sss[2]}|\big\}.
	\end{equation}
	That is, $B_x(v)$ is a box of width $|v^{[1]}|$ and height $|v^{[2]}|$, positioned so that $\lfloor x \rfloor v$ is its corner closest to the origin.
	
	Since
	\begin{equation}
	\lfloor x \rfloor |v^{\sss[i]}| \leq x |v^{\sss[i]}| \leq (\lfloor x \rfloor+1) |v^{\sss[i]}|
	\end{equation}
	we can conclude that $[xv] \in B_x(v)$. We thus have the bound
	\begin{align}
	\frac{T_{\lfloor x \rfloor v,[xv]}}{x} &\leq  \frac{\max\{T_{\lfloor x \rfloor v,u} : u \in B_x(v)\}}{x} \\
	&\leq \sum_{\substack{0 \leq i \leq |v^{\sss[1]}| \\ 0 \leq j \leq |v^{\sss[2]}|}} \frac{T_{\lfloor x\rfloor v, \lfloor x \rfloor v +is_1e_1 + js_2e_2}}{x}.\label{donkey1}
	\end{align}
	Similarly
	\begin{equation}\label{eqn:Txv_box_sum}
	\frac{T_{[xv],\lfloor x \rfloor v}}{x} \leq \sum_{\substack{0 \leq i \leq |v^{\sss[1]}| \\ 0 \leq j \leq |v^{\sss[2]}|}} \frac{T_{\lfloor x \rfloor v +is_1e_1 + js_2e_2,\lfloor x\rfloor v}}{x}.
	\end{equation}
	
	By translation invariance we have $T_{u,u+v} \stackrel{d}{=} T_v$ for $u,v \in \mathbb{Z}^d$. Thus for any $n \in \mathbb{Z}_+$ and $\varepsilon>0$ with fixed $0\leq i \leq |v^{[1]}|, 0 \leq j \leq |v^{[2]}|$ (excluding $i=j=0$, which is trivial)
	\begin{align}
	\mathbb{P} \Big( \frac{T_{nv,nv+is_1e_1+js_2e_2}}{n} > \varepsilon\Big) &= \mathbb{P}(T_{is_1e_1+js_2e_2} > n\varepsilon) \\
	&\leq \mathbb{P}(T_{is_1e_1} > n\varepsilon\cdot \textstyle\frac{i}{i+j}) + \mathbb{P}(T_{is_1e_1,is_1e_1+js_2e_2} > n\varepsilon\cdot \frac{j}{i+j}) \\
	&= \mathbb{P}(T_{is_1e_1} > n\varepsilon\cdot \textstyle\frac{i}{i+j}) + \mathbb{P}(T_{js_2e_2} > n\varepsilon\cdot \frac{j}{i+j}) \\
	&\textstyle\leq i \mathbb{P}(T_{s_1e_1} > \frac{n\varepsilon}{i+j}) + j\mathbb{P}(T_{s_2e_2} > \frac{n\varepsilon}{i+j}).
	\end{align}
	If $s_i = 0$ then of course $T_{s_ie_i} = 0$, while if $s_i=1$ then $T_{s_ie_i} = 1$.  Otherwise we have $s_i = -1$. Note that $\sum_{n \geq 1} \mathbb{P}(T_{-e_1} > n) \leq \mathbb{E}[T_{-e_1}] < \infty$ by Lemmas~\ref{lem:Zd} and~\ref{lem:Zd2}. Then because
$X=T_{-e_1}(i+j)/\vep\ge 0$ we have 	
		\begin{equation*}
\sum_{n\geq 1}\P\Big(T_{-e_1} > \frac{n\vep}{i+j}\Big) 
=\sum_{n\geq 1} \P(X>n)\le \E[X]= \frac{i+j}{\vep}\cdot \E[T_{-e_1}]<\infty.
		\end{equation*}

	 With $E_n$ denoting the event $\{\frac{T_{nv,nv+is_1e_1+js_2e_2}}{n} > \varepsilon\}$, it follows that
	\begin{equation}
	\sum_{n=1}^\infty \mathbb{P}(E_n) < \infty,
	\end{equation}
	and then by the Borel-Cantelli lemma $\mathbb{P}(E_n \text{ occurs infinitely often}) = 0$.  Since this is true for any $\varepsilon>0$, we must have
	\begin{equation}
	\lim_{n\to\infty} \frac{T_{nv,nv+is_1e_1+js_2e_2}}{n} = 0, \quad\text{a.s.}
	\end{equation}
	
	Switching back to real $x$, we have that almost surely
	\begin{equation}
	\lim_{x\to\infty} \frac{T_{\lfloor x \rfloor v,\lfloor x \rfloor v+is_1e_1+js_2e_2}}{x} = \lim_{x\to\infty}\frac{T_{\lfloor x \rfloor v,\lfloor x \rfloor v+is_1e_1+js_2e_2}}{\lfloor x \rfloor} \cdot \frac{\lfloor x \rfloor}{x} = 0.
	\end{equation}
	Applying this to \eqref{eqn:Txv_box_sum} shows that
	\begin{equation}
	\lim_{x\to\infty} \frac{T_{\lfloor x \rfloor v,[xv]}}{x} = 0, \quad\text{a.s.}
	\end{equation}
	Similar arguments show the corresponding result for $T_{[xv],\lfloor x \rfloor v}$.   Expectations are handled similarly.  Take the expectation of both sides of \eqref{donkey1} (resp.~\eqref{eqn:Txv_box_sum}) and note that  the expectations of the numerator terms on the right hand side of \eqref{donkey1} are equal to $\E[T_{is_1e_1+js_2e_2}]<\infty$.   Since the sums are finite and $x \to \infty$ we see that $\E[\frac{T_{\lfloor x \rfloor v,[xv]}}{x}] \to 0$ as $x\to \infty$ and similarly $\E[	\frac{T_{[xv],\lfloor x \rfloor v}}{x} ]\to 0$.
	This completes the proof for $d=2$. 
	
	The generalisation to $d>2$ is straightforward: the box $B_x(v)$ is just the higher-dimensional analogue, with the point $\lfloor x \rfloor v$ the corner closest to the origin. Then again $[xv] \in B_x(v)$, and to get from $\lfloor x \rfloor v$ to $[xv]$ we again get an upper bound by considering each dimension separately.
	
To upgrade the result to $u\in \Q^d$ simply note that $v=m_u u\in \Z^d \setminus \{o\}$, and 
\begin{align*}
\frac{T_{[xu]}}{x}=\frac{1}{m_u}\frac{T_{[x/m_u \cdot v]}}{x/m_u}.
\end{align*}
As $x\to \infty$ also $x/m_u\to \infty$, and the limit of the above is therefore $\zeta_p(v)/m_u=\zeta_p(u)$ as required.
\end{proof}

\begin{proof}[Proof of Corollary \ref{cor:D}]
	
	Fix $v\in \Z^d$, and let $\vep>0$.  We show that almost surely 
	\begin{align}
	\liminf_{n \to \infty}n^{-1} D_n(v)&\ge \dfrac{1}{(1+\vep)\zeta_p(v)}, \qquad \text{ and}\label{liminfD}\\
	\limsup_{n \to \infty}n^{-1} D_n(v)&\le \dfrac{1}{(1-\vep)\zeta_p(v)}.\label{limsupD}
	\end{align}
	This proves that $n^{-1} D_n(v)\to \zeta_p(v)^{-1}$ almost surely and the $L^q$ convergence (for any $q>0$) follows from dominated convergence since $D_n(v)/n\in [0,1/\|v\|]$ for $v\in \Z^d$.	 
	
	To prove \eqref{liminfD}, 
	let $c_1:=\dfrac{1}{(1+\vep)\zeta_p(v)}$.  Then, a.s., for all but finitely many $n$ we have (by Lemma \ref{lem:limit_through_reals})
	\[\dfrac{T_{[nc_1v]}}{nc_1}\le \frac{1}{c_1}.\]
	So, a.s., for all but finitely many $n$ we have
	$T_{[nc_1v]}\le n$ and  therefore also $D_n(v)\ge nc_1$ as required.
	
	To prove 	\eqref{limsupD} let $c_2=\dfrac{1}{(1-\vep)\zeta_p(v)}$, and suppose that 
	\[\limsup_{n \to \infty}n^{-1} D_n(v)> c_2.\]
Then there exists a random subsequence $(N_k)_{k \in \N}$ with $N_k\ge N_{k-1}+1$ such that 	
	\[\dfrac{D_{N_k}(v)}{N_k}> c_2, \quad \text{ for every }k.\]
	Setting $M_k=D_{N_k}(v)$ it follows that for every $k$, 
	\begin{align}
	\sup\{m:T_{[mv]}\le N_k\}=M_k>c_2N_k.\label{hohum}
	\end{align}
In particular, for every $k$ there exists $M'_k\in (c_2N_k,M_k]$ such 	that $ T_{[M'_k v]}\le N_k$.
	
Since by Lemma \ref{lem:limit_through_reals}, 
$x^{-1}T_{[xv]}\to \zeta_p(v)$ a.s.~as $x\to \infty$ we have that $T_{[xv]}\ge (1-\vep/2)\zeta(v)x$ for all  $x$ sufficiently large (a.s.).  In particular, for all $k$ sufficiently large we have 
\[T_{[M'_kv]}>(1-\vep/2)\zeta(v)M'_k>(1-\vep/2)\zeta(v)c_2N_k>N_k,\]
which contradicts $T_{[M'_kv]}\le N_k$ above.  Thus the event that $\limsup_{n \to \infty}n^{-1} D_n(v)> c_2$ has probability 0, which completes the proof.
	\end{proof}

\section{Features of the shape for $p<p_c(2)$}
\label{sec:shape_props}
In this section we prove Theorem \ref{thm:features}, except for (b) which is omitted as the proof is via an elementary (and standard) coupling argument.
\subsection{Proof of Theorem \ref{thm:features}(a)}
\begin{proof}[Proof of Theorem \ref{thm:features}(a)(i)]
	This follows from Lemma \ref{lem:limit_through_reals} since
\[\zeta_p(qu)=\lim_{x \to \infty}\frac{T_{[xqu]}}{x}=q \lim_{x \to \infty} \frac{T_{[xqu]}}{xq}=q\zeta_p(u).\]
\end{proof}

\begin{proof}[Proof of Theorem \ref{thm:features}(a)(ii)]
By Theorem \ref{thm:1} and subadditivity for $T$, for $x,y\in \Z^d$ we have 
	\begin{align*}
	\zeta_p(x+y) &= \lim_{n \to \infty} \frac{\mathbb{E}[T_{n(x+y)}]}{n} \\
	&\leq \lim_{n \to \infty} \frac{\mathbb{E}[T_{nx} + T_{nx,n(x+y)}]}{n}\\
	&= \lim_{n \to \infty} \frac{\mathbb{E}[T_{nx}] + \mathbb{E}[T_{ny}]}{n}\\
	&= \zeta_p(x) + \zeta_p(y).
	\end{align*}
For  $x,y\in \Q^d$  there exists an $m=m(x,y)\in \N$ such that $mx,my\in \Z^d\setminus\{o\}$ and then from (i) 
\begin{align*}
\zeta_p(x+y)=\zeta_p(m^{-1}(mx+my))&=m^{-1}\zeta_p(mx+my)\\
&\le m^{-1}(\zeta_p(mx)+\zeta_p(my))=\zeta_p(x)+\zeta_p(y).
\end{align*}
	\end{proof}
	
	\begin{proof}[Proof of Theorem \ref{thm:features}(a)(iii)]
Let $x,h \in \Q^d$. From (ii) we have
	\begin{equation}
	\zeta_p(x+h) \leq \zeta_p(x) + \zeta_p(h), \qquad \zeta_p(x) \leq \zeta_p(x+h) + \zeta_p(-h),
	\end{equation}
	from which it follows that
	\begin{equation}
	|\zeta_p(x+h)-\zeta_p(x)| \leq \max\{\zeta_p(h),\zeta_p(-h)\}.
	\end{equation}
	Now we always have $\zeta_p(e_i) = 1$ and $\zeta_p(-e_i) \geq 1$. Thus, using (i) and (ii),
	\begin{align*}
	\zeta_p(h) &= \zeta_p\bigg(\sum_{i=1}^d h^{[i]}e_i\bigg) \leq \sum_{i=1}^d \zeta_p(h^{[i]}e_i) \leq \sum_{i=1}^d |h^{[i]}| \zeta_p(-e_i)= \|h\|_1 \zeta_p(-e_1).
	\end{align*}
	The same upper bound applies to $\zeta_p(-h)$. Hence
	\begin{equation}
	|\zeta_p(x+h)-\zeta_p(x)| \leq \|h\|_1 \zeta_p(-e_1).
	\end{equation}
	This verifies the claim.
	\end{proof}

\subsection{Proof of Theorem \ref{thm:features}(c)}

 \begin{proof}[Proof of Theorem \ref{thm:features}(c)(i)]
 Fix $d\ge 2$ and $p$ as in the theorem.  
  Note first that by continuity of $\zeta$ it is sufficient to prove the result for $v$ with rational coordinates, and with $a\in (p,1)$.  
 	
Fix $v\in \sg$ (with rational coordinates).  Then we can write $v$ as   
\[v=\sum_{i=1}^d (a\alpha'_i-(1-a)\beta'_i)e_i,\]
where $\alpha'_i=\alpha_i/a$ and $\beta'_i=\beta_i/(1-a)$.
Since $v$ has rational coordinates it follows that  $m_v:=\inf\{k\in \N:kv\in \Z^d\}<\infty$, and  the points $nm_vv$ are in $\Z^d$ for $n\in \Z_+$.  It therefore suffices to show that there is a constant $C>0$  such that almost surely for each $\vep>0$, $T_{m_vnv}\le nm_v+C\vep n$ infinitely often.  We will prove this by constructing a path $\Gamma$ consistent with the environment along which every site $x$ is reachable in time $\|x\|_1$, and such that the points $m_vnv$ are reachable by following $\Gamma$ for a long time and then taking $+e_i$ steps for a relatively short time.
 
 We start by enlarging the probability space to include the random environment, as well as some i.i.d.~standard uniform random variables $(U_n)_{n\ge 0}$ that are also independent of the environment.  Proving the result in this enlarged space establishes the desired result (since the desired result is simply an a.s.~statement about the environment).

Let $\vep<a-p$ and $b\in (a-\vep,a)$.  Set $\Gamma_0=o$ and define $\Gamma_n$ for $n\ge 1$ recursively as follows: 
\begin{itemize}
\item if   $\Gamma_{n-1}\in \Omega_+$ then we choose our next step to be $e_i$ with probability $\alpha'_i$ independent of the past (here we use the random variable $U_n$), 
\item if $\Gamma_{n-1}\notin \Omega_+$ then we choose our next step to be $e_i$ with probability $(b-p)\alpha'_i/(1-p)$, and $-e_i$ with probability $(1-b)\beta'_i/(1-p)$,  independent of the past, (using $U_n$).  
\end{itemize}
By \eqref{constraint3} if this path ever takes a step $e$ then it a.s.~never takes a step $-e$.  It follows that $\|\Gamma_n\|_1=n$ for every $n$, and that $\Gamma$ is  a self-avoiding path, so the environment seen at every time is a half site with probability $p$ (independent of the past).  It follows that $\Gamma$ is a random walk (i.e.~it has i.i.d.~increments)  with 
\begin{align}
\P(\Gamma_n-\Gamma_{n-1}=e_i)&=p\alpha'_i+(1-p)(b-p)\alpha'_i/(1-p)=b\alpha'_i,\quad \text{ and }\\
\P(\Gamma_n-\Gamma_{n-1}=-e_i)&=(1-p)(1-b)\beta'_i/(1-p)=(1-b)\beta'_i.
\end{align}
The expected increment is then 
\[\mu_b:=\sum_{i=1}^d (b\alpha'_i-(1-b)\beta'_i)e_i.\]
Note that 
\[\mu_b-v=  \sum_{i=1}^d (b-a)(\alpha'_i+\beta'_i)e_i.\]
If for some $j$, $\alpha'_j=\beta'_j=0$ then for every $n$, $\Gamma_n\cdot e_j=0=v\cdot e_j$.  For all other coordinate directions, the law of large numbers for $\Gamma$ and the fact that $(\mu_b-v)\cdot e_i=(b-a)(\alpha'_i+\beta'_i)<0$
 imply that for all $n$ sufficiently large, $n^{-1}\Gamma_n \cdot e_i <v\cdot e_i$.  Moreover, since $a-b<\vep$ we have that $n^{-1}\Gamma_n \cdot e_i>v\cdot e_i-2\vep$ for all $n$ sufficiently large.  Thus for all $n$ sufficiently large, we can reach $m_vnv$ by following the path $\Gamma$ to $\Gamma_{m_vn}$ and then taking at most $2d\vep m_v n$ steps in positive coordinate directions (recall that such steps are possible from every site) to $m_vnv$.  Thus for all $n$ sufficiently large we have that
\[T_{m_vnv}\le T_{\Gamma_{m_vn}}+2d\vep m_v n=m_vn+2d\vep m_v n,\]
which completes the proof.
 \end{proof}

Let $\P^\dagger_q$ denote the law of oriented site percolation on $\Z^d$ with parameter $q$.   
Theorem \ref{thm:features}(c)(ii) is a consequence of the  following ``well-known'' result.
\begin{LEM}
\label{lem:OPhits}
Fix $d\ge 2$ and let $q>p_{\dagger}(d)$.  For $v\in \Z^d$ in the interior of the deterministic cone $O_q$,
\begin{equation}
\P^\dagger_q(o \to nv \text{ i.o.})\ge \inf_{n\in \Z_+}\P^\dagger_q(o \to nv)=\eta(q)>0.
\end{equation}
\end{LEM}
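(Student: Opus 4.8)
The plan is to prove the lemma in two steps: first show that $\eta(q):=\inf_{n\in\Z_+}\P^\dagger_q(o\to nv)$ is strictly positive, and then deduce the ``infinitely often'' statement from this by a reverse Fatou argument. The second step is immediate: since the events $\bigcup_{n\ge N}\{o\to nv\}$ are decreasing in $N$ with intersection $\{o\to nv \text{ i.o.}\}$, continuity from above together with the trivial bound $\P^\dagger_q(\bigcup_{n\ge N}\{o\to nv\})\ge\sup_{m\ge N}\P^\dagger_q(o\to mv)$ give
\begin{align*}
\P^\dagger_q(o\to nv\text{ i.o.})
&=\lim_{N\to\infty}\P^\dagger_q\Big(\bigcup_{n\ge N}\{o\to nv\}\Big)\\
&\ge\limsup_{n\to\infty}\P^\dagger_q(o\to nv)
\ \ge\ \inf_{n\in\Z_+}\P^\dagger_q(o\to nv)=\eta(q).
\end{align*}
So everything rests on showing $\eta(q)>0$.

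For this step, first note that (taking the orientation to be the standard one $\{e_i\}_{i=1}^d$, which we may by the obvious symmetry of these models) $O_q\subseteq\orth$, so an interior point $v\in\Z^d$ has strictly positive coordinates; hence for each fixed $n$ there is a monotone nearest-neighbour path from $o$ to $nv$, all of whose finitely many sites are open with probability at least $q^{\|nv\|_1+1}>0$, and so $\P^\dagger_q(o\to nv)>0$ for every $n$. As the minimum of finitely many positive numbers is positive, it suffices to prove $\liminf_{n\to\infty}\P^\dagger_q(o\to nv)>0$. Here I would invoke the shape theorem for supercritical oriented site percolation in the ``fullness'' form: writing $\theta(q):=\P^\dagger_q(|C_o|=\infty)>0$ (with $C_o$ the open oriented cluster of $o$), on $\{|C_o|=\infty\}$ the rescaled cluster $n^{-1}C_o$ converges \emph{as a set} to $O_q$, so, $v$ being interior, almost surely for all large $n$ there is a vertex of $C_o$ lying in the bounded box $B_{nv}:=\prod_{i=1}^d\{nv^{\sss[i]}-1,nv^{\sss[i]}\}$ just behind $nv$ with respect to the orientation. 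Thus $\P^\dagger_q(E_n)\ge\theta(q)-o(1)$, where $E_n=\{C_o\cap B_{nv}\ne\varnothing\}$. If additionally all of the at most $2^d$ sites of $B_{nv}$ are open — an increasing event $F_n$ of probability at least $q^{2^d}$ — then any vertex of $C_o$ in $B_{nv}$ can be joined to $nv$ by a short open oriented path, so $E_n\cap F_n\subseteq\{o\to nv\}$; since $E_n$ and $F_n$ are both increasing, FKG gives $\P^\dagger_q(o\to nv)\ge\P^\dagger_q(E_n)\P^\dagger_q(F_n)\ge(\theta(q)-o(1))\,q^{2^d}$, whence $\liminf_n\P^\dagger_q(o\to nv)\ge\theta(q)\,q^{2^d}>0$ and $\eta(q)>0$.

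The main obstacle is the input to the previous paragraph: the shape theorem in the ``fullness'' form asserting that $n^{-1}C_o$ converges to $O_q$ \emph{as a set} (not merely that its outer boundary does). This is standard and well documented for $d=2$ (see e.g.\ \cite{Dur84,DG,IS}), but I am not aware of a clean citable statement for $d>2$; one would either transfer the planar argument via the general block/renormalisation machinery (of Bezuidenhout--Grimmett type) or prove the needed fullness directly. The only other technical point — that $nv$ must itself be an open site, so that literally ``$nv\in C_o$'' cannot hold for \emph{all} large $n$ — is exactly what the short-path patching through $B_{nv}$ is designed to absorb, and is routine. (One can also observe that by FKG $n\mapsto\P^\dagger_q(o\to nv)$ is supermultiplicative, so it certainly has a well-defined exponential decay rate; the content we actually need is the stronger statement that this rate is $0$ and moreover the sequence is bounded away from $0$, which is precisely what the fullness of the shape supplies.)
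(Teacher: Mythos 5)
Your overall architecture (reduce to $\eta(q)>0$, get the ``i.o.''\ statement by continuity from above, handle finitely many $n$ by an explicit open monotone path, finish with an FKG patching step) is sound, and the first and last of these steps match the paper. The genuine gap is exactly the input you flag, and it is worse than a missing citation: the statement you actually use --- that on $\{o\to\infty\}$, almost surely for all large $n$ the cluster $C_o$ contains a vertex of the $2^d$-site box $B_{nv}$, whence $\P^\dagger_q(E_n)\ge\theta(q)-o(1)$ --- is false, and no form of convergence of the rescaled set $n^{-1}C_o$ to $O_q$ can deliver it. The boxes $B_{nv}$ along a sparse subsequence of $n$ are pairwise disjoint, so by the second Borel--Cantelli lemma almost surely infinitely many of them are entirely closed and hence contain no vertex of $C_o$; thus the a.s.-eventual box-hitting claim fails, and the bound $\P^\dagger_q(E_n)\ge\theta(q)-o(1)$ (equivalently, $\P^\dagger_q(o\to\infty,\ C_o\cap B_{nv}=\varnothing)\to0$) is unsupported --- survival does not force the space-time cluster to visit a prescribed far-away box of $O(1)$ size, since one can survive while that box is entirely vacant. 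The citable ``fullness'' statements (for $d\ge2$, via Bezuidenhout--Grimmett/Durrett--Griffeath type shape theorems) concern the \emph{ever-infected spatial region}: a.s.\ on survival it eventually contains $(1-\vep)tU$. That is weaker than, and does not imply, proximity of the cluster to a specific space-time point such as $nv$, which is what your $E_n$ demands; rescaled convergence only places cluster points within $o(n)$ of $nv$, and the $q^{2^d}$ patch cannot absorb an $o(n)$ discrepancy.

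The paper's proof shows what the correct substitute for your bounded-cost patch is. Writing points of the orthant as $z=(z_{d-1},\|z\|_1)$ (space, time) and $v=(v_{d-1},k)$ with $k=\|v\|_1$, it sets $t_n=\inf\{j:o\to(nv_{d-1},j)\}$, the first time the spatial column of $nv$ is reached, and decomposes there: by independence,
\begin{equation*}
\P^\dagger_q(o\to nv)\ \ge\ \sum_{\ell\le nk}\P^\dagger_q(t_n=\ell)\,\P^\dagger_q\big(o\to(o,nk-\ell)\big).
\end{equation*}
The ever-infected form of the shape theorem gives $t_n\le nk$ eventually a.s.\ on survival (this is the legitimate use of ``fullness''), while the \emph{complete convergence theorem} gives $\delta:=\inf_m\P^\dagger_q(o\to(o,dm))>0$, so that $\P^\dagger_q(o\to nv)\ge\delta(1-\eta)\P^\dagger_q(o\to\infty)$ for large $n$. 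In other words, the final climb from ``the right column at some time $\le nk$'' to ``exactly $nv$'' costs a uniformly positive probability of complete-convergence type, not a local event of probability $q^{2^d}$; any repair of your argument needs an ingredient of that strength (in $d=2$ one could instead use the strong local statement that on survival one eventually has $o\to x$ for every $x$ in the interior cone whose forward cluster is infinite, plus FKG), rather than set-convergence of the shape alone.
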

\begin{proof}
The inequality is a standard consequence of continuity of probability measures.  To show that the infimum is positive we will make use of the ``shape theorem'' and ``complete convergence theorem'' for oriented site percolation in general dimensions.  For this purpose it is convenient to express each point $z$ in the positive orthant as $z=(z_{d-1},\|z\|_1)$, where the last coordinate represents the time associated to the point and $z_{d-1}\in \R^{d-1}$. Here $z_{d-1}$ should be interpreted as the projection of $z$ onto the hyperplane $x_1 + \dots + x_d = 0$, i.e.\ the hyperplane orthogonal to the vector $e_1 + \dots  + e_d$ which contains the origin.

Let $v$ be in the interior of $O_q$.  Let $U\subset \R^{d-1}$ denote the asymptotic shape (see e.g.~\cite[Theorem 5]{BG} or \cite[Theorem 1]{IS}) conditional on the cluster of the origin being infinite.  Note that $O_q=\cup_{t\ge 0}(tU)$.  Then $v=(v_{d-1},k)$ where $k=\|v\|_1>0$ and there exists $\vep>0$ such that $v_{d-1}\in (1-\vep)kU$.  
Fix this $\vep$, and let $n \in \N$.  Then $nv_{d-1}\in (1-\vep)nkU$.  Let $t_n=\inf\{j: o \to (nv_{d-1},j)\}$.  
Now observe that 
\begin{align}
\P^\dagger_q(o \to nv)&\ge \sum_{\ell\le nk}\P\big(t_n=\ell, (nv_{d-1},\ell) \to (nv_{d-1},nk)\big)\\
&=\sum_{\ell\le nk}\P(t_n=\ell)\P\big((nv_{d-1},\ell) \to (nv_{d-1},nk)\big)\\
&=\sum_{\ell\le nk}\P(t_n=\ell)\P\big(o \to (o,nk-\ell)\big),
\end{align}
where the sum is over $\ell$ such that $nk-\ell$ is divisible by $d$, and the first equality holds by independence.  
By the complete convergence theorem (e.g.~\cite[Theorem 4]{BG}) we have that $\lim_{n\to \infty}\P^\dagger_q\big(o \to (o,dn)\big)\to c>0$.  Since each point in the positive orthant can be reached with positive probability this implies that $\delta:=\inf_{n\ge 0}\P^\dagger_q\big(o \to (o,dn)\big)>0$.  Thus
\begin{align}
\P^\dagger_q (o \to nv)&\ge \delta \P(t_n \le nk).
\end{align}

By the shape theorem, a.s.~on the event $\{o \to \infty\}$ we have that $t_n\le nk$ for all $n$ sufficiently large.   So for every $\eta>0$ there exists some $n_0$ such that for all $n\ge n_0$, 
\begin{equation}
\P^\dagger_q(o \to nv)\ge \delta \P^\dagger_q(t_n\le nk)\ge \delta (1-\eta)\P^\dagger_q(o \to \infty).
\end{equation}
 Since $\inf_{n\le n_0}\P^\dagger_q(o \to nv) >0$ this completes the proof.
\end{proof}

\begin{proof}[Proof of Theorem \ref{thm:features}(c)(ii)]
	Fix $p$ as in the theorem and such that $1-p>p_c^{osp}(d)$.  Let $v\in \Z^d$ be in the interior of the deterministic cone $O_{1-p}$.  By Lemma \ref{lem:OPhits}, with probability at least $\eta(1-p)>0$, for infinitely many $n$, one can reach $nv$ from $o$ by following a path of full sites using only steps in $O$.  On this event we therefore have that $T_{nv}=\|nv\|_1$ infinitely often.  Since $T_{nv}/n \to \zeta_{p}(v)$ this proves that $v\in \mc{S}_{p}$.   For $u\in \Q^d$ in the interior of $O_{1-p}$ the result follows since $v=m_u u\in \Z^d$.  
\end{proof}

\subsection{Proof of Theorem \ref{thm:features}(d)}
The proof of (d)(i) is standard, but we include it for completeness, and to help the reader understand why the more unusual argument that we present to prove (d)(ii) is used.

\begin{proof}[Proof of Theorem \ref{thm:features}(d)(i)]
Fix $p$ as in the theorem.  
Without loss of generality we may assume that $i=1$.   Let $\vep>0$ be sufficiently small so that 
\begin{equation}
\eta(\vep,\delta):=\Big(\dfrac{(2d-1)(1+\vep)e}{\vep+\delta}\Big)^{\vep+\delta}(1-p)^{1-\delta}<1\label{eps_choice}
\end{equation} 
and $(1-2\delta-\vep)>0$  for any $\delta$ with $|\delta|<\vep$.   This can be done since $1-p<1$ and $(b/x)^x \to 1$ as $x \to 0$ for any $b>0$.

Let $u\in \DD$ with $\|u\|_1=1$ and $u^{[1]}=-(1-\delta)$, where $|\delta|<\vep$.    Suppose that we can reach $un=um_un'$ 
 in at most $(1+\vep)n$ steps.  Then there must be a self-avoiding path $\gamma$ starting at the origin and ending at $un$, of length $\ell(\gamma)\in [n,(1+\vep)n]$,   that is consistent with the environment.   Let $D_n$ be the event that such a path exists.  We will show that $\P(D_n)$ is summable in $n$ and hence by Borel-Cantelli $\P(D_n \text{ i.o.})=0$.  This shows that $\zeta_p(v)\ge(1+\vep)$ and therefore completes the proof.

Let us verify the claim that $\P(D_n)$ is summable.   Let $A_{n,k}$ denote the set of self-avoiding paths $\gamma$ from $o$ to $nu$, with $\ell(\gamma)=k$, and let $B_n=\cup_{k=n}^{\floor{(1+\vep)n}}A_{n,k}$.  
For a path $\gamma\in A_{n,k}$, let $w(\gamma)$ denote the number of steps in direction $-e_1$ taken by the path.  The number of paths of length $k$ with exactly $w$ steps in direction $-e_1$ is at most ${k \choose k-w}(2d-1)^{k-w}$ since there are at most $2d-1$ choices for each of the $k-w$ other steps.   For any path $\gamma\in A_{n,k}$, $w(\gamma)\in [n(1-\delta),k]$.  Now observe that for any $k\in [n,n(1+\vep)]$ and $w\in [n(1-\delta),k]$ we have $0\le k-w\le n(\vep+\delta)$.  

It follows that
\begin{align*}
|B_n|&\le \sum_{k=n}^{\floor{(1+\vep)n}}\sum_{w=\lceil{n(1-\delta)\rceil}}^k{k \choose k-w}(2d-1)^{k-w}\\
&\le \sum_{k=n}^{\floor{(1+\vep)n}}\sum_{w=\lceil{n(1-\delta)\rceil}}^k{k \choose \lceil{n(\vep+\delta)\rceil} }(2d-1)^{n(\vep+\delta)},
\end{align*}
where we have used the fact that for $w,k$ contributing to the sums, $k-w<k/2$ since $(1-2\delta-\vep)>0$.
This is at most
\[\sum_{k=n}^{\floor{(1+\vep)n}}\sum_{w=\lceil{n(1-\delta)\rceil}}^k{\floor{(1+\vep)n} \choose \lceil{n(\vep+\delta)\rceil} }(2d-1)^{n(\vep+\delta)}.\]
Now use the fact that for $k\in 1,\dots, n$, ${n \choose k}\le (ne/k)^k$ (e.g.~proof by induction on $k$) to see that 
\begin{align}
|B_n|&\le \sum_{k=n}^{\floor{(1+\vep)n}}\sum_{w=\lceil{n(1-\delta)\rceil}}^k \Big(\frac{(1+\vep)e}{\vep+\delta}\Big)^{1+n(\vep+\delta)}
(2d-1)^{n(\vep+\delta)}\nn\\
&\le \frac{(1+\vep)e}{\vep+\delta}\Bigg(\Big(\frac{(2d-1)(1+\vep)e}{\vep+\delta}\Big)^{(\vep+\delta)}\Bigg)^n \cdot (1+n\vep) \cdot (1+n(\delta+\vep)).
\end{align}
Finally, note that any $\gamma\in B_n$ is consistent with the environment with probability at most $(1-p)^{w(\gamma)}\le (1-p)^{n(1-\delta)}$.  Therefore 
\begin{align*}
\P(D_n)\le (1-p)^{n(1-\delta)}|B_n| &\leq (\eta(\vep,\delta))^n\frac{(1+\vep)e}{\vep+\delta}(1+n\vep)(1+n(\vep+\delta))\\
&\le 2n^2(\eta(\vep,\delta))^n 
\end{align*}
for $n$ sufficiently large (depending on $\vep$ and $\delta$).
By \eqref{eps_choice}, this is exponentially small in $n$, hence summable, and the claim is proved.
\end{proof}

In the proof of Theorem \ref{thm:features}(d)(i) above, the number of paths grows exponentially in $n$, but the exponential growth can be made as close to 1 as we like by taking $\vep$ small and $u$ very close to $-e_1$ (i.e.~$\delta$ small).  In part (d)(ii) of the theorem we cannot take $u$ close to $-e_1$, so the number of paths grows at an exponential rate that cannot be made close to 1.  For example, the number of self-avoiding paths of length exactly $n$ from $o$ to $(-(1-s)n,sn)$ (where $s\in \Q\in (0,1)$ and $sn\in \Z$) is ${n\choose sn}$ which grows exponentially fast with growth rate depending on $s$.   Thus, in order to prove (d)(ii) of the theorem we need a more sophisticated argument than simple enumeration of paths.  This is the content of the remainder of the paper.  The proof as written works for two dimensions.  It would be of interest to see whether modifications of these arguments can be used to obtain improvements to (d)(i) in higher dimensions.

Henceforth we fix $d=2$.   Let $b(\gamma)$ denote the number of $\rightarrow, \downarrow$ steps of a path $\gamma$.   Given an environment $\bs{\omega}$, a path $\gamma$ in $\Z^2$ starting from $o$ is said to be $\bs{\omega}$-\emph{good} if all $\leftarrow$ steps of the path are consistent with the environment $\bs{\omega}$.  Let $G(\bs{\omega})$ be the set of (finite) $\bo$-good paths.

Given $\bo$ and $\gamma\in G(\bo)$ define the \emph{westernisation} of $\gamma$ to be the path $\tilde{\gamma} \in G(\bo)$ of the same length as $\gamma$ such that the times and types of step in $\{\rightarrow,\downarrow\}$ are identical for the two paths and such that outside of these times, $\tilde{\gamma}$ always takes the $\leftarrow$ step when the environment allows (and $\uparrow$ when it does not).  Recall that the coordinates of a point $x\in \Z^2$ are denoted by $x=(x^{\sss [1]},x^{\sss [2]})$.

\begin{LEM}
\label{lem:western}
Let $\bo$ be an environment and $\gamma\in G(\bo)$.  Then $\gamma$ and its   westernisation $\tilde{\gamma}$ satisfy
 \[\gamma^{\sss [1]}_m-\tilde{\gamma}^{\sss [1]}_m =\gamma^{\sss [2]}_m-\tilde{\gamma}^{\sss [2]}_m\ge 0, \quad  \text{ for all }m \ge 0.\]
\end{LEM}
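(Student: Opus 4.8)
The plan is to induct on $m$, tracking the difference vector $\delta_m := \gamma_m - \tilde\gamma_m$ and showing that at every time $m$ it has the form $\delta_m = c_m(e_1 + e_2)$ for some integer $c_m \ge 0$ (so that $\delta_m^{[1]} = \delta_m^{[2]} = c_m \ge 0$, which is exactly the claim with $c_m = \gamma_m^{[1]} - \tilde\gamma_m^{[1]}$). The base case $m=0$ is immediate since both paths start at $o$, so $c_0 = 0$. For the inductive step, suppose $\delta_{m-1} = c_{m-1}(e_1+e_2)$ with $c_{m-1}\ge 0$, and compare the $m$-th steps of the two paths. By the definition of the westernisation, the two paths take the same step at time $m$ whenever that step is in $\{\rightarrow,\downarrow\}$, and otherwise $\gamma$ takes a $\leftarrow$ step (it is $\bo$-good, so this step is consistent with the environment) while $\tilde\gamma$ takes $\leftarrow$ if the environment at $\tilde\gamma_{m-1}$ permits and $\uparrow$ otherwise.

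The only case in which the two steps differ is therefore when $\gamma$ steps $\leftarrow$ but $\tilde\gamma$ steps $\uparrow$; in that case $\delta_m = \delta_{m-1} + (-e_1) - e_2 = (c_{m-1}-1)(e_1+e_2)$, so $c_m = c_{m-1}-1$. In every other case $\delta_m = \delta_{m-1}$, i.e.\ $c_m = c_{m-1}$. Hence $c_m$ is always a (weakly) decreasing-by-at-most-one integer-valued process, and to finish the induction I must rule out $c_m$ becoming negative, i.e.\ I must show that the ``bad'' transition ($\gamma$ goes $\leftarrow$, $\tilde\gamma$ goes $\uparrow$) cannot occur when $c_{m-1}=0$. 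But $c_{m-1}=0$ means $\tilde\gamma_{m-1} = \gamma_{m-1}$, and since $\gamma$ takes a $\leftarrow$ step at time $m$ that is consistent with the environment, the environment at $\gamma_{m-1} = \tilde\gamma_{m-1}$ does allow the $\leftarrow$ step — so by definition $\tilde\gamma$ also takes $\leftarrow$, not $\uparrow$. Thus the bad transition is impossible when $c_{m-1}=0$, and $c_m \ge 0$ for all $m$, completing the induction.

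The one point requiring a little care — and the only real obstacle — is the bookkeeping that the coordinate \emph{difference} stays of the diagonal form $c_m(e_1+e_2)$ throughout: one must check that the three non-differing cases ($\rightarrow$ vs $\rightarrow$, $\downarrow$ vs $\downarrow$, $\leftarrow$ vs $\leftarrow$) each leave $\delta_m = \delta_{m-1}$ unchanged (immediate, since both paths take the same step), and that only the $\leftarrow$-vs-$\uparrow$ case shifts it, decreasing the common coordinate value by exactly one. Because a westernised path never takes a $\rightarrow$ step that is not forced to match $\gamma$ (the only moves available to $\tilde\gamma$ outside the matched $\{\rightarrow,\downarrow\}$ times are $\leftarrow$ and $\uparrow$), there is no possibility of the difference ever moving in the $-e_1+e_2$ or $e_1-e_2$ directions, so the diagonal form is preserved. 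This closes the argument.
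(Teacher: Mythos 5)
Your overall strategy is the same induction as the paper's (track the coordinatewise difference, observe it stays diagonal, and use goodness of $\gamma$ to rule out the decrement when the difference is zero), and your key step is exactly right: if $c_{m-1}=0$ then $\gamma_{m-1}=\tilde\gamma_{m-1}$, and a $\leftarrow$ step of the good path $\gamma$ certifies that $\leftarrow$ is available there, forcing $\tilde\gamma$ to take $\leftarrow$ as well. However, your case analysis starts from a misreading of the setup: you assert that outside the matched $\{\rightarrow,\downarrow\}$ times ``$\gamma$ takes a $\leftarrow$ step.'' That is not part of the definition of $G(\bo)$ — a $\bo$-good path may take $\uparrow$ steps at those times (only its $\leftarrow$ steps are required to be consistent), and indeed these are precisely the steps the westernisation is designed to replace; if $\gamma$ only ever took $\leftarrow$ at the unmatched times there would be nothing to westernise. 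Consequently your claim that ``the only case in which the two steps differ is $\gamma$ steps $\leftarrow$ but $\tilde\gamma$ steps $\uparrow$'' is false: the combination $\gamma$ steps $\uparrow$ while $\tilde\gamma$ steps $\leftarrow$ also occurs (e.g.\ when $\gamma$ declines an available $\leftarrow$, or when $\leftarrow$ is available at $\tilde\gamma_{m-1}$ but not at $\gamma_{m-1}$).

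Fortunately the omission is benign: in the missing case $\delta_m=\delta_{m-1}+e_1+e_2$, so $c_m=c_{m-1}+1$, which preserves both the diagonal form and nonnegativity; at unmatched times each path's step lies in $\{-e_1,+e_2\}$, so the increment of $\delta$ is always $0$ or $\pm(e_1+e_2)$ and no off-diagonal drift can occur. Once you add this case (and the trivial $\uparrow$-vs-$\uparrow$ case), your induction closes and coincides with the proof in the paper.
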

\begin{proof}
Proof by induction.  The claim is trivially true at time 0.  Suppose the claim is true up to and including time $m-1$.   If at time $m$ one of the paths takes a step in $\{\rightarrow, \downarrow\}$ then the other takes the same step, so the result is also trivially true at time $m$.  

Otherwise, if $\gamma^{\sss [1]}_{m-1}-\tilde{\gamma}^{\sss [1]}_{m-1} =\gamma^{\sss [2]}_{m-1}-\tilde{\gamma}^{\sss [2]}_{m-1}=0$ then both paths are at the same location $x_{m-1}$.  If the local environment $\omega(x_{m-1})$ does not have $\leftarrow$ then both paths take $\uparrow$, and the claim continues to hold (since neither coordinatewise difference changes).  Otherwise if the local environment does have $\leftarrow$ it may be that both paths take this arrow (so the claim continues to hold) or $\gamma$ takes $\uparrow$ and $\tilde{\gamma}$ takes $\leftarrow$ in which case each coordinatewise difference increases by 1, so the claim still holds.

Otherwise if $\gamma^{\sss [1]}_{m-1}-\tilde{\gamma}^{\sss [1]}_{m-1} =\gamma^{\sss [2]}_{m-1}-\tilde{\gamma}^{\sss [2]}_{m-1}>0$, then the only way that either of these increments can change is if one path takes $\leftarrow$ and the other takes $\uparrow$.    But in this case either both differences increase by 1, or both differences decrease by 1, so the claim still holds.  
\end{proof}

Let $\tilde{G}(\bo)\subset G(\bo)$ denote the set of paths in $G(\bo)$ that never take a $\uparrow$ step from a location where $\leftarrow$ is available.

\begin{proof}[Proof of Theorem \ref{thm:features}(d)(ii)]
Let $p$ be as in the statement of the theorem, and $s\in [0,p)\cap \Q$.  Let $\vep\in (0,(p-s)/10)$.  Let $n\in \N$ be such that $ns$ in an integer.

Any path $\gamma$ starting from the origin with length $\ell(\gamma)\le n(1+\vep)$ and taking more than $\vep n$ $\downarrow, \rightarrow$ steps (i.e.~$b(\gamma)\ge \vep n$) cannot reach a point on the boundary of the $\ell_1$ ball $B_1(o,n)$ with first coordinate non-positive and second coordinate  non-negative (i.e.~in the northwest quadrant).  Let 
\begin{align}
J_n=\Big\{&\exists \gamma \text{ consistent with $\bo$ with }\ell=\ell(\gamma)\le n(1+\vep)  \text{ and }b(\gamma)\le \vep n\text{ such that }\\
&0\le \gamma_\ell^{\sss [2]}\le n(s+\vep/4), \gamma_\ell^{\sss [1]}\le -n(1-s-\vep/4),\|\gamma_\ell\|_1\ge n\Big\}.
\end{align}
We will show that there exist $C,c>0$ such that 
\begin{align}
\P(J_n)\le Ce^{-cn}, \quad \text{ for all }n\in \N.
\label{Jnbound}
\end{align}
To see why this is sufficient to prove the result, note that if $\zeta_p(u)=1$ then there must exist infinitely many $n$ such that $n u$ is reached within time $n(1+\vep)$.  But $0\le nu^{\sss [2]} \le n(s+\vep/4)$,  $nu^{\sss [1]}\le -n(1-s-\vep/4)$ and $\|nu\|_1=n$, so this means that $J_{n}$ occurs.  By \eqref{Jnbound} and Borel-Cantelli this can only occur for finitely many $n$.  Thus the subsequence of $n^{-1}T_{[nu]}$ for which $nu\in \Z^2$ a.s. does not converge to 1.  Thus we must have $\zeta_p(u)>1$.

The left hand side of \eqref{Jnbound} is at most 
\begin{align}
\P\Big(&\exists \gamma\in G(\bo) \text{ with }\ell(\gamma)\le n(1+\vep)  \text{ and }b(\gamma)\le \vep n\text{ such that }\nn\\
&0\le \gamma_\ell^{\sss [2]}\le n(s+\vep/4), \gamma_\ell^{\sss [1]}\le -n(1-s-\vep/4),\|\gamma_\ell\|_1\ge n\Big).\label{pp1}
\end{align}
By Lemma \ref{lem:western} if there exists a $\gamma$ as in this event then its westernisation $\tilde{\gamma}$ also satisfies the constraints in \eqref{pp1} except that we may have $\tilde{\gamma}_\ell^{\sss [2]}<0$.   In particular we have that for such a $\gamma$,
\begin{align*}
\|\tilde{\gamma}_\ell\|_1&=-\tilde{\gamma}_\ell^{\sss[1]}+|\tilde{\gamma}_\ell^{\sss[2]}|\\
		&=-\gamma_\ell^{\sss[1]}+( \gamma_\ell^{\sss[1]}  - \tilde{\gamma}_\ell^{\sss[1]})
+			|\tilde{\gamma}_\ell^{\sss[2]}|\\
				&=-\gamma_\ell^{\sss[1]}+( \gamma_\ell^{\sss[2]}   -\tilde{\gamma}_\ell^{\sss[2]})+							|\tilde{\gamma}_\ell^{\sss[2]}|\\
								&\ge -\gamma_\ell^{\sss[1]}+\gamma_\ell^{\sss[2]}=\|\gamma_\ell\|_1\ge n.
\end{align*}

 Thus, \eqref{pp1} is at most
\begin{align}
\P\Big(&\exists \gamma\in \tilde{G}(\bo) \text{ with }\ell(\gamma)\le n(1+\vep)  \text{ and }b(\gamma)\le \vep n\text{ such that }\nn\\
&\gamma_\ell^{\sss [2]}\le n(s+\vep/4), \gamma_\ell^{\sss [1]}\le -n(1-s-\vep/4),\|\gamma_\ell\|_1\ge n\Big).\label{pp2}
\end{align}
A path $\gamma$ such as that in \eqref{pp2} need not be self-avoiding.  If it is not self-avoiding then by removing loops, it contains a shorter path $\gamma'$ with $\ell(\gamma')\le \ell(\gamma)$ and $b(\gamma')\le b(\gamma)$ that ends at the same point as $\gamma$.   Let $\tilde{S}(\bo)\subset \tilde{G}(\bo) $ denote the set of paths in $\tilde{G}(\bo) $ that are self-avoiding.

Then \eqref{pp2} is at  most
\begin{align}
&\P\Big(\exists \gamma\in \tilde{S}(\bo) \text{ with }\ell(\gamma)\le n(1+\vep)  \text{ and }b(\gamma)\le \vep n\text{ such that }\nn\\
& \qquad \gamma_\ell^{\sss [2]}\le n(s+\vep/4), \gamma_\ell^{\sss [1]}\le -n(1-s-\vep/4),\|\gamma_\ell\|_1\ge n\Big)\nn\\
&\le \sum_{l=n}^{n(1+\vep)}\sum_{b=0}^{\vep n}\P\Big(\exists \gamma\in \tilde{S}(\bo) \text{ with }\ell(\gamma)=l  \text{ and }b(\gamma)=b\text{ such that }\label{pp3}\\
&\phantom{\le \sum_{l=n}^{n(1+\vep)}\sum_{b=1}^{\vep n}\P\Big(\exists }
 \gamma_l^{\sss [2]}\le n(s+\vep/4), \gamma_l^{\sss [1]}\le -n(1-s-\vep/4),\|\gamma_l\|_1\ge n\Big).\nn
\end{align}
The upper limits of these sums are $\floor{n(1+\vep)}$ and $\floor{n\vep}$ respectively since $l$ and $b$ are integer-valued.
 Now we sum over the times $\vec{t}(\gamma)=\{t_1(\gamma), \dots, t_b(\gamma)\}$ at which the path $\gamma$ takes a $\rightarrow, \downarrow$ step and a sum over the exact sequence $\vec{x}(\gamma)\in \{\rightarrow,\downarrow\}^b$ of such  steps to see that \eqref{pp3} is at most
 \begin{align}
 \sum_{l=n}^{n(1+\vep)}\sum_{b=0}^{\vep n}\sum_{\vec{t}}\sum_{\vec{x}}
 \P\Big(&\exists \gamma\in \tilde{S}(\bo) \text{ with }\ell(\gamma)=l,\vec{t}(\gamma)=\vec{t}  \text{ and }\vec{x}(\gamma)=\vec{x} \text{ such that }\nn\\
&
 \gamma_l^{\sss [2]}\le n(s+\vep/4), \gamma_l^{\sss [1]}\le -n(1-s-\vep/4),\|\gamma_l\|_1\ge n\Big).\label{pp4}
\end{align}
Now consider how to determine whether the above event occurs.  Construct a random path $\Gamma:=\Gamma(\bo,\vec{t},\vec{x})$ of length $l$ as follows.  Starting from the origin at time $0$ (set $\Gamma_0=o$), if $j =t_i \in \vec{t}$ then let $\Gamma$ take step $x_i$ (i.e.~$\Gamma_{j+1}=\Gamma_{j}+x_i$).  If not, look at the environment $\omega_{\Gamma_{j}}$: if this contains $\leftarrow$ then $\Gamma_{j+1}=\Gamma_{j}-e_1$, and otherwise $\Gamma_{j+1}=\Gamma_{j}+e_2$.  If this path meets itself at some time then it is not self-avoiding, so the event has not occurred.  As long as this path never meets itself then (each new environment that is looked at has the usual law, is independent of the past and) we can ask whether  $\Gamma_l^{\sss [2]}\le n(s+\vep/4)$ and $\Gamma_l^{\sss [1]}\le -n(1-s-\vep/4)$, and $\|\Gamma_l\|_1\ge n$.  If all of these are true then the event has occurred.  Otherwise the event has not occurred.  It follows that the event in \eqref{pp4} is equal to 
\begin{align}
\Big\{\Gamma \text{ is self-avoiding, with }\Gamma_l^{\sss [2]}\le n(s+\vep/4), \Gamma_l^{\sss [1]}\le -n(1-s-\vep/4),\|\Gamma_l\|_1\ge n\Big\}.
\end{align}

Now, let $\bs{X}:=(X_i)_{i\ge 0}$ be an i.i.d.~sequence with $\P(X_i=\leftarrow)=1-p=1-\P(X_i =\uparrow)$, and fix $b$ and $\vec{t}$ and $\vec{x}$.  Let $\bar{\Gamma}=\bar{\Gamma}(\bs{X},\vec{t},\vec{x})$ be a path of length $l$ in $\Z^2$ defined as follows:
\begin{itemize}
\item $\bar{\Gamma}_0=o$
\item if $j=t_i\in \vec{t}$ then $\bar{\Gamma}_{j+1}-\bar{\Gamma}_{j}$ is equal to $x_i$
\item otherwise $\bar{\Gamma}_{j+1}-\bar{\Gamma}_{j}$ is equal to $X_j$.
\end{itemize}
The probability in \eqref{pp4} is equal to 
\begin{align}
&\P\Big(\bar{\Gamma} \text{ is self-avoiding, } \bar\Gamma_l^{\sss [2]}\le n(s+\vep/4), \Bar\Gamma_l^{\sss [1]}\le -n(1-s-\vep/4),\|\Bar\Gamma_l\|_1\ge n\Big)\nn\\
&\le\P\Big( \bar\Gamma_l^{\sss [2]}\le n(s+\vep/4), \Bar\Gamma_l^{\sss [1]}\le -n(1-s-\vep/4),\|\Bar\Gamma_l\|_1\ge n\Big)\nn\\
&\le \P\Big( \bar\Gamma_l^{\sss [2]}\le n(s+\vep/4)\Big).\label{hippo1}
\end{align}
Now note that $\bar{\Gamma}$ is a (biased) simple random walk of length $l-b$, with $b$ deterministic $\downarrow, \rightarrow$ steps inserted at specific times in the path.  Next note that $b$ is very small compared to $l$, and we are asking for this walk to deviate far from its mean at time $l$ of order $n$.  We now show that this probability is exponentially small with exponent that can be taken uniform in small $\vep$.  To be precise, we will show that there exist $c,C>0$ depending on $p,s$ (but not $\vep$) such that for all $l\in [n,n(1+\vep)]\cap \Z_+$, $b\le n\vep$, $\vec{t}$, and $\vec{x}$, and all $\vep>0$ sufficiently small,
\begin{equation}
\P\Big( \bar\Gamma_l^{\sss [2]}\le n(s+\vep/4)\Big)\le Ce^{-cn}.\label{hippo2}
\end{equation}
Let us assume that \eqref{hippo2} holds and explain how to proceed.  

Using \eqref{hippo2} we have that \eqref{pp4} is at most
\begin{align}
\sum_{l=n}^{n(1+\vep)}\sum_{b=0}^{\vep n}\sum_{\vec{t}}\sum_{\vec{x}}Ce^{-cn}.\label{hippo3}
\end{align}
The sum over $\vec{x}$ gives a contribution at most $2^{b}\le 2^{\vep n}$.  The sum over $\vec{t}$ contributes at most ${\floor{n(1+\vep)} \choose \lceil{n\vep\rceil}}$ when $n$ is large and $\vep<1/3$.  This combinatorial factor is at most
\begin{align}
\dfrac{C(n+n\vep)^{n+n\vep+1/2}}{
(n\vep)^{n\vep+1/2}(n-1)^{n-1+1/2}}\le \dfrac{C(n+n\vep)^{n+n\vep}}{
(n\vep)^{n\vep}(n-1)^{n-1}},
\end{align}
for all $n$ sufficiently large depending on $\vep$.  This is at most
\begin{align}
 \dfrac{C'n^{n+n\vep}(1+\vep)^{n+n\vep}}
 {n^{n\vep}\vep^{n\vep}n^{n-1}}= \dfrac{C'n(1+\vep)^{n+n\vep}}
 {\vep^{n\vep}}=C'n \Bigg(\dfrac{(1+\vep)^{1+\vep}}{\vep^\vep}\Bigg)^n.
\end{align}
The exponential growth rate of this quantity is 
\begin{align}
(1+\vep) (1+1/\vep)^{\vep}=(1+\vep)e^{\vep \log(1+1/\vep)},
\end{align}
which approaches $1\times e^0=1$ as $\vep \downarrow 0$.  Thus for small $\vep>0$, for all $n$ sufficiently large \eqref{hippo3} is at most
\begin{equation}
\sum_{l=n}^{n(1+\vep)}\sum_{b=0}^{\vep n}Cn(1+a_\vep)^n 2^{n\vep} Ce^{-cn}\le C''n^3e^{-c'n}\le C'''e^{-c''n},
\end{equation}
We have shown that \eqref{Jnbound} holds (and hence the theorem is proved), assuming \eqref{hippo2}.  It therefore remains to prove that for some $c,C>0$ (not depending on $\vep$) \eqref{hippo2} holds for all $l,b,\vec{t},\vec{x}$.  Fix $l,b,\vec{t},\vec{x}$ and let $N_{r}=\sum_{i=1}^{r}\indic{X_i=\uparrow}$ be the number of $\uparrow$ steps among the first $r$ of the $X$'s.  Then \eqref{hippo1} is at most 
\begin{align}
\P\Big(N_{l-b}\le n(s+\vep/4)+n\vep\Big)&=\P\Big(N_{l-b}\le p(l-b)-(p(l-b)-n(s+5\vep/4))\Big).\label{dog1}
\end{align}
Here, $N_{l-b}$ is a sum of $l-b$ i.i.d.~(0-1-valued) random variables each with expectation $p$, so $\E[N_{l-b}]=p(l-b)$.  Since $b\le \vep n$ and $l\ge n$ we have 
\begin{align}
p(l-b)-n(s+5\vep/4)&\ge pn(1-\vep)-n(s+5\vep/4)\\
&=n(p(1-\vep)-s-5\vep/4).
\end{align}
Since $\vep<(p-s)/10$ we have
\begin{align}
p(1-\vep)-s-5\vep/4&>p(1-(p-s)/10)-s-5(p-s)/10\\
&=(p-s)[1-(p+5)/10]\\
&>4(p-s)/10.
\end{align}
This means that 
\[p(l-b)-n(s+5\vep/4)\ge 4n(p-s)/10\ge 2(l-b)(p-s)/10,\]
where we have used the fact that $l-b\le l\le n(1+\vep)\le 2n$.

Thus \eqref{dog1} is at most
\begin{align}
\P\Big(N_{l-b}\le (l-b)[p-2(p-s)/10]\Big)\le Ce^{-c(l-b)},
\end{align}
for some $C,c$ depending only on $p,s$ (e.g.~by Hoeffding's inequality).  Since $l-b\ge n(1-\vep)>n/2$ (for all $\vep$ sufficiently small), this verifies \eqref{hippo2} as claimed.
\end{proof}

\section*{Acknowledgements}
Part of this research was undertaken while MH was supported by a Future Fellowship FT160100166 from the Australian Research Council.  We thank Geoffrey Grimmett for helpful discussions regarding Lemma \ref{lem:OPhits}, and for providing the proof of Lemma \ref{lem:siteperc}.  MH thanks Tom Salisbury for spotting an error in an earlier version of the document.

\bibliographystyle{plain}

\end{document}